%% file: main.tex
\documentclass{amsart}

\usepackage{amsmath,amssymb}
\usepackage{mathrsfs}
\usepackage{comment}
\usepackage{tikz-cd}
\usepackage{graphicx}

\usepackage{color} 
\usepackage[linktocpage=true]{hyperref}
\hypersetup{
    colorlinks=true, 
    linkcolor=blue,  
}

\numberwithin{equation}{section}

\theoremstyle{plain}
\newtheorem{theorem}{Theorem}[section]
\newtheorem{proposition}[theorem]{Proposition}
\newtheorem{lemma}[theorem]{Lemma}

\theoremstyle{definition}

\newtheorem{definition}{Definition}[section]

\DeclareMathOperator{\Tr}{Tr}

\def\R{\mathbb{R}}
\def\Z{\mathbb{Z}}

\def\C{\mathbb{C}}

\def\S{\mathbb{S}}
\def\D{\partial}
\def\Dd{\mathcal{D}}

\DeclareMathOperator{\diff}{Diff}

\def\tnabla{\widetilde{\nabla}}
\def\tnablax{\tnabla_{\partial_x}}
\def\bnabla{{}^b\! \nabla}
\def\beb{{}^b \overline{\eta}}
\def\be{{}^b \eta}
\def\b{{}^b\! }

\DeclareMathOperator{\bTr}{{}^{\mathit{b}} \! Tr}

\newcommand{\bint}{{}^{b}\hspace{-1ex}\int}

\def\dbar{d\hspace*{-0.08em}\bar{}\hspace*{0.1em}}

\def\ab{{}_{ab}}
\def\absym{^{ab}\!\sigma}
\DeclareMathOperator{\tf}{\textbf{tf}}
\DeclareMathOperator{\ff}{\textbf{bf}}

\def\Dt{\Dd_t}
\def\DD{(\Dt^2)^{-\frac{s+1}{2}}}
\def\DDD{\Dt(\Dt^2)^{-\frac{s+1}{2}}}

\def\t{\tau}

\begin{document}

\title[The ab-calculus and the global anomaly formula]
{The Adiabatic b-Calculus and the Global Anomaly Formula on Manifolds with Boundary}

\author{Chad Nelson}
\address{Department of Mathematics \\Binghamton University\\
Vestal Parkway East\\Binghamton\\NY 13902\\ U.S.A. }
\email{cnelson5@binghamton.edu}
\keywords{Global anomaly formula, determinant line bundles,
eta invariant, adiabatic limit, manifolds with boundary,
pseudodifferential operators}

\begin{abstract}
We study the determinant line bundle for a family of elliptic operators on compact manifolds with boundary with the associated Quillen metric and Bismut-Freed connection, and we prove the global anomaly formula in this context. This extends Witten's holonomy theorem to manifolds with boundary. Witten studied
the limiting behavior of the eta invariant of a Dirac operator on the total space of a fibration over $\mathbb{S}^1$ when the metric in the base $\mathbb{S}^1$ is
blown up in a certain manner, a process known as taking the
adiabatic limit. In this paper, we construct the calculus of adiabatic $b$-pseudodifferential operators, which naturally synthesizes the $b$-calculus and the adiabatic calculus of pseudodifferential operators. Using this calculus, and assuming the fiber operators to be invertible, we compute the
adiabatic limit of the natural eta invariant in this setting and relate it to the integral of the Bismut-Freed connection and new correction terms coming from the boundary of the fibers.
\end{abstract}

\subjclass[2020]{Primary 58J52;
Secondary 58J20, 58J28, 58J40}

\maketitle

{\textbf{Acknowledgments.} This paper is based on my PhD dissertation. I would like to thank my doctoral advisor, Professor Paul Loya, for introducing me to the world of pseudodifferential operators and singular analysis, as well as for proposing this project. I also wish to thank him for all of the guidance and encouragement he has given me over the years.   }

\tableofcontents

\input{intro}

\input{abcalc}
\input{witten}

\bibliographystyle{amsplain}
\def\cprime{$'$} \def\polhk#1{\setbox0=\hbox{#1}{\ooalign{\hidewidth
  \lower1.5ex\hbox{`}\hidewidth\crcr\unhbox0}}}
\providecommand{\bysame}{\leavevmode\hbox to3em{\hrulefill}\thinspace}
\providecommand{\MR}{\relax\ifhmode\unskip\space\fi MR }
\providecommand{\MRhref}[2]{%
  \href{http://www.ams.org/mathscinet-getitem?mr=#1}{#2}
}
\providecommand{\href}[2]{#2}

\end{document}

%% file: intro.tex
\section{Introduction}
Determinant line bundles were introduced into the world of mathematics and physics in \cite{QuiD85}. Quillen considers a smooth complex vector bundle $E$ over a compact one-dimensional complex manifold and the space $\mathscr{A}$ of index zero Cauchy-Riemann operators on $E$. Quillen solved the following problem: determine, in a canonical way, a determinant function $\mathrm{DET}: \mathscr{A} \to \C$ such that $\mathrm{DET}\, \overline{\D} \ne 0$ if and only if $\overline{\D}$ is invertible. His solution was to assign to each $\overline{\D} \in \mathscr{A}$ its determinant line $\mathcal{L}_{\overline{\D}}$, which all together define a holomorphic line bundle $\mathcal{L} \to \mathscr{A}$, the determinant line bundle. The key to Quillen's construction is finding an everywhere parallel trivialization. In general, the obstruction to finding a parallel trivialization is called the \emph{anomaly} and both the curvature (the local anomaly) and the holonomy (the global anomaly) are exactly the obstructions to its existence. For applications of determinant lines to mathematics and physics, see, for example, \cite{AtiM-81-84-2597,IBisJ93,ICheJ87,BDoS-KrP90,FrD86,IFrD87,ISiI88,IWitE85,WiE85}.

In \cite{BiJ-FrDI86}, Bismut and Freed introduce Quillen's construction in the families setting. To describe their result, let $\phi :M \to B$ be a fiber bundle of closed manifolds with even-dimensional fibers having smoothly varying spin structures and Riemannian metrics. Let $E$ be a Hermitian vector bundle over $M$ and let $\Dd = \{\Dd_x\}_{x \in B}$ be the family of spin Dirac operators along the fibers; thus, the base $B$ plays the role of the parameter space $\mathscr{A}$ in Quillen's original construction. From this data, Bismut and Freed showed how to define a determinant line bundle $\mathcal{L} \to B$ with a canonical section. Given a unitary connection on $E$ and a connection (horizontal distribution) on $TM$ one can then define a metric (the Quillen metric) on $\mathcal{L}$ with a compatible connection (the Bismut-Freed connection). In \cite{BiJ-FrDII86} they compute the local anomaly using Bismut's heat equation approach to the families index \cite{BisJ85,QuiDII85} and they prove independently with Cheeger \cite{ChJ87} the global anomaly formula, verifying a conjecture of Witten \cite{WiE85} relating the global anomaly with the adiabatic limit of the eta invariant. To describe the global anomaly formula more precisely, let $B = \mathbb S^1$ with its trivial (non-bounding) spin structure and consider the \emph{adiabatic metric} on $M$ defined by
\[
g_t = \frac{\phi^* d x^2}{t^2} + g_{M/B}\ ,\ \ t \in (0,\infty)
\]
where $x$ denotes the standard angular coordinate on $\mathbb S^1$ and $g_{M/B}$ is the family of fiber metrics, and let $\Dd_t$ be the corresponding spin Dirac operator over $M$.  As $t \to 0$, we see that
the base blows up.  Since $M$ is
compact, for each $t > 0$ the Atiyah-Patodi-Singer \cite{AtM-PaV-SiI75} eta invariant $\eta(\Dd_t)$ is defined.
Assuming the family of Dirac operators on the fibers are invertible, Witten's holonomy theorem, or the global anomaly formula, is the following:
\[
\lim_{t \to 0} i \pi\, \eta(\Dd_t) = - \log
\,\mathrm{hol} \,(\mathcal{L}) ,
\]
where $\mathcal{L}$ denotes the determinant line bundle of the
family $\Dd^+$. In \cite{MorS04}, Moroianu extends this result to general elliptic families of first order differential operators, which is significant because previous proofs systematically employed elements special to Dirac operators such as anticommuting variables; moreover, he computes the adiabatic limit of the full eta function and not just the eta invariant at $s = 0$. For further generalizations and applications of the adiabatic limit of the eta invariant, see \cite{BiJ-ChJ89,BiJ-ChJ92,DaiX91,IDaiX93,DaX02,DaX-FrD94,DelSVII87,DelSVI87,DoR-WoK91,LotJ94,
NiL98,PaJ-WoK02III,PaJ-WoK02,PaJ-WoK06} to name a handful of references.

In \cite{PiaP96,PiaP98} Piazza extended the local anomaly formula to the case when the fibers are compact manifolds with boundary. The viewpoint taken by Piazza is that the analysis of operators on compact manifolds with boundary is more lucid and more directly proved when recast in the language of complete manifolds with (asymptotically) cylindrical ends. Here, one has available Melrose's $b$-geometry and $b$-calculus of pseudodifferential operators \cite{BMeR93} (one could also use the cusp calculus \cite{IMeR96}), which can be used not only to investigate the spectral properties of operators but also to correctly formulate the problem so that proofs are actually tractable \cite{MaR-PiP98,IPiP97}. The goal of this paper is to prove the global anomaly formula for manifolds with boundary using the $b$-machinery.

Let $\phi : M \to B$ be a fiber bundle where both $M$ and $B$ are compact and where the fibers are diffeomorphic to a compact manifold with boundary $Y$. We make the following assumptions on the fiber bundle $\phi : M \to B$:
\begin{enumerate}
\item Fix a boundary defining function $\rho$ and fix a corresponding family $g_{M/B}$ of \textit{exact} $b$-metrics on the fibers, where \textit{exact} means that
\[
g_{M/B} = \frac{d\rho^2}{\rho^2} + h,
\]
with $h \in C^\infty(M,T^*(M/B)\otimes T^*(M/B))$.

\item Fix a \emph{$b$-connection}, which is a horizontal distribution on the fiber bundle $\phi : M \to B$ compatible with the boundary defining function $\rho$.

\item Finally, we fix a Hermitian $\Z_2$-graded vector bundle $E = E^+ \oplus E^- \to M$ and a unitary $\Z_2$-graded connection $\nabla$ on $E$.

\end{enumerate}

The above assumptions induce a connection $\widetilde{\nabla}$ on the infinite dimensional bundle over $B$ whose fiber over a point $x \in B$ is the space $\dot{C}^\infty(M_x,E|_{M_x})$, where $M_x$ is the fiber above $x$, consisting of sections of $E|_{M_x}$ vanishing to infinite order at $\D M_x$.  The $b$-determinant line bundle and the $b$-Quillen metric for a family of Fredholm first order $b$-pseudodifferential operators acting along the fibers are constructed by Piazza in \cite{PiaP96}.

For the rest of the paper we restrict ourselves to families $\Dd^+ = \{\Dd^+_x\}_{x \in B}$ of first order elliptic $b$-differential operators,
\begin{equation*}
\Dd^+_x : C^\infty (M_x , E^+|_{M_x}) \to C^\infty(M_x, E^-|_{M_x}),
\end{equation*}
where $M_x$ is the fiber above $x \in B$. We assume the operators are invertible at each $x\in B$. We assume that this family is of Dirac-type at the boundary, which essentially means that at the boundary we can write
\begin{equation*} 
\Dd^+ \sim G \big(\rho \D_{\rho} + \Dd \big),
\end{equation*}
where $G: E^+|_{\partial M} \to E^-|_{\partial M}$ is a unitary isomorphism and $\Dd = \{\Dd_{x}\}_{x \in B}$ is a family of invertible, self-adjoint operators on $\partial M$ and $\sim$ is made precise using the notion of indicial operators, see Section \ref{sec_witten}. Since $\Dd$ is a family of Fredholm operators we can define the total normalized eta function at the boundary:
\begin{equation} \label{bebintro}
\overline{\eta}(s, \D M) : = \frac{\Gamma\left( \frac{s + 1}{2}
\right)}{\sqrt{\pi}}  \Tr \Big( (\Dd^2)^{-
\frac{s + 1}{2}} \Dd \Big),
\end{equation}
where $\Tr$ denotes the fiber-wise trace. The function $\overline{\eta}(s,\D M)$ is smooth on the base $B$ and is meromorphic as a function of $s \in \C$.

In Section \ref{sec_abcalc} we introduce the adiabatic $b$-calculus, or the $ab$-calculus, and we use this calculus to prove the global anomaly formula in Section \ref{sec_witten}. The $ab$-calculus is a natural synthesis of the small $b$-calculus $\Psi_b$ described by Melrose \cite{BMeR93} and the adiabatic calculus $\Psi_a$ described by Moroianu \cite{MorS04, HomologyAndAdiabatic}. Our proof of the global anomaly theorem closely imitates that of \cite{MorS04}, with the key difference being the presence of the $b$-trace functional. This is an extension of the standard trace functional for elements in $\Psi^{-\infty}_b(M)$, despite these elements not being trace class in general.

In section \ref{sec_witten} we are interested in the holonomy of the $b$-determinant line bundle $\mathcal{L}$ along the base. Thus, we shall restrict the fiber bundle $\phi : M \to B$ to a curve on the base $B$, which we take to be diffeomorphic to $\S^1$.

Let $\D_x$ denote the canonical positively oriented unit vector field on $\S^1$. For $t > 0$ consider the self-adjoint operator
\[
\Dd_t = \begin{pmatrix} - t i \widetilde{\nabla}_{\D_x} &
\Dd^- \\
\Dd^+ & t i \widetilde{\nabla}_{\D_x} \end{pmatrix} : C^\infty(M,E) \to
C^\infty(M,E)
\]
where $\Dd^- = (\Dd^+)^*$, the adjoint with respect to the metric $g_{M/B}$ on the fibers, and  $\widetilde{\nabla}$ is the unitary connection on the
infinite-dimensional bundle of fiber-wise sections vanishing to
infinite order at the boundary.

The operator $\Dd_t$ is an elliptic $ab$-differential operator on $M$ that is invertible for $t > 0$ sufficiently small (Proposition \ref{prop_invert}). In particular, for $t > 0$ we can define its normalized $b$-eta function
\[
\beb (s,\Dd_t)  = \frac{\Gamma\left( \frac{s + 1}{2}
\right)}{\sqrt{\pi}} \bTr ( ( \Dd_t^2)^{- \frac{s +
1}{2}} \Dd_t )  .
\]
It is well-known \cite{LauR-MorS02,LoP01,LoPII01} that $\bTr ( ( \Dd_t^2)^{- \frac{s +
1}{2}} \Dd_t )$ defines a meromorphic function of $s \in \C$ with at most simple poles at the arithmetic sequence $n-1,n-3,n-5,\ldots$ where $n = \dim M$, and $s = 0$ is a regular point \cite{GilP81}.

The $b$-Bismut-Freed meromorphic family of one-forms on $B$ is by definition the meromorphic
family of one-forms
\[
\b\omega(s) = \frac{\Gamma \left( \frac{s}{2} + 1 \right)}{2}
\bigg( \bTr \left( (\Dd^- \Dd^+)^{- \frac{s}{2} - 1}
\Dd^- \dot{\Dd}^+  \right)
+ \bTr \left( \Dd^- \dot{\Dd}^+   (\Dd^- \Dd^+)^{- \frac{s}{2} -
1} \right) \bigg) ,
\]
where $\bTr$ denotes the fiber-wise $b$-trace and $\dot{\Dd}^+:= [\widetilde{\nabla} , \Dd^+]$ denotes differentiation of $\Dd^+$ by $\widetilde{\nabla}$.

We do \emph{not} use Quillen's super-formalism \cite{QuiDII85} in this paper. Thus, we define
\begin{equation*} \label{nonsuper}
\Dd^- \dot{\Dd}^+ : = dx \otimes \Dd^- \widetilde{\nabla}_{\D_x} \Dd^+ .
\end{equation*}
Quillen's super-formalism would demand a minus sign to be
introduced when an odd operator passes over an odd-degree
differential form. We make this remark because there are numerous sign
discrepancies in this work and in other works, for example,
\cite{BBeN-GeE-VeM92}.

In the spirit of \cite{BiJ-FrDI86,IFrD87} it would be more usual to define
\[
\b\omega(s) =
 \bTr \Big( (\Dd^- \Dd^+)^{- \frac{s}{2} - 1} \Dd^-
\dot{\Dd}^+ \Big).
\]
This connection one form defines a connection on the $b$-determinant line bundle.  However, the connection is \emph{not} unitary. The reason the usual definition of the Bismut-Freed connection one form fails to define a unitary connection is that the $b$-trace does not vanish on commutators; the way we have forced symmetry in the definition of the connection one form gives unitarity.

There is a canonical non-vanishing section $\det \Dd^+$ of the $b$-determinant line bundle $\mathcal{L} \to \S^1$, which satisfies
\[
\bnabla^{\det} \det (\Dd^+) = \b \omega \otimes \det (\Dd^+),
\]
where $\bnabla^{\det}$ denotes the $b$-Bismut-Freed connection on $\mathcal{L}$ and $\b \omega$ is the $b$-Bismut-Freed connection one form, which equals the regular value of $\b \omega(s)$ at $s = 0$. 

We are now ready to state the main result.

\begin{theorem}[Global Anomaly Formula]
\label{thm_GlobalAnomaly}
\label{thm-main}
The
normalized $b$-eta function $\beb(s,\Dd_t)$ extends smoothly to
$t = 0$ and in the convergence of meromorphic functions of $s \in \C$,
    \begin{equation*}
        \lim_{t\to0} i\pi\,^b\overline{\eta}(s,\Dt) = \int_{\S^1} \b \omega (s)  - \frac{\Gamma(\frac{s+1}{2})}{4\sqrt{\pi}}\int_{\S^1} \Tr((\Dd^2)^{-\frac{s+1}{2}}\Dd G^* \dot{G}).
    \end{equation*}
\end{theorem}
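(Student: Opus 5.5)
We follow Moroianu's strategy, carried out inside the $ab$-calculus, with the $b$-trace in place of the trace. Write the complex power $\DDD$ as an $ab$-pseudodifferential operator of order $-s$ depending smoothly on $t\in[0,\infty)$. The construction of complex powers in the $ab$-calculus from Section~\ref{sec_abcalc} gives this: Cauchy integral of the resolvent, with invertibility for small $t$ from Proposition~\ref{prop_invert}, and uniform resolvent estimates from the invertibility of the fiber family $\Dd^\pm$ and of the boundary family $\Dd$. The $ab$-calculus then gives a $b$-trace defined for $\Re s$ large, smooth in $t$ down to $t=0$, and continued meromorphically in $s$ uniformly in $t$. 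That continuation comes from the pushforward theorem and the structure of the $ab$-double space (front faces $\ff,\tf$). Together these give the first claim: $\beb(s,\Dt)$ extends smoothly to $t=0$.

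For the limit we rescale. The $ab$-Schwartz kernel, restricted to the diagonal at $t=0$, is governed by the adiabatic normal operator. Fourier transform in the base direction (rescaled variable $\xi=\tau/t$) turns it into the family of suspended fiber operators
\[
\sigma(\Dt)(x,\xi)=\begin{pmatrix} \xi & \Dd^-_x\\ \Dd^+_x & -\xi\end{pmatrix},
\]
plus first-order corrections in $t$ involving $\tnablax$. Because $t$ multiplies $\D_x$, the $b$-trace density of $\DDD$ has order $t^{-1}$ after the $\xi$-integration. That leading term is odd in $\xi$ and off-diagonal, so its fiber trace vanishes. The surviving $t^0$ term is the first-order term in the expansion of the normal operator composition. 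It involves one derivative $\tnablax\Dd^\pm$, which yields the operators $\Dd^-\dot\Dd^+$ appearing in $\b\omega(s)$.

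The explicit computation expands $(\Dt^2)^{-\frac{s+1}{2}}$ via Duhamel/resolvent perturbation to first order in $t$, integrates in $\xi$ with the Beta-function identity
\[
\int_\R(\xi^2+\lambda)^{-\frac{s+1}{2}-1}d\xi=\sqrt{\pi}\,\Gamma\!\left(\tfrac{s}{2}+1\right)\Gamma\!\left(\tfrac{s+3}{2}\right)^{-1}\lambda^{-\frac{s}{2}-1},
\]
and tracks the normalizing constant $\Gamma(\frac{s+1}{2})/\sqrt{\pi}$. This should produce $\frac{1}{i\pi}\int_{\S^1}\b\omega(s)$, with the two symmetrized orderings in $\b\omega(s)$ coming from the two off-diagonal blocks.

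The main obstacle is the correction term. With ordinary traces one cyclically permutes to combine the terms, but the $b$-trace of a commutator is not zero. Instead, by Melrose's formula, $\bTr[A,B]$ equals $\frac{i}{2\pi}\int_\R\Tr\big(\partial_\lambda I(A,\lambda)\,I(B,\lambda)\big)d\lambda$, expressed through indicial families. We will apply this to each commutator generated in the previous step. The indicial family of $\Dd^+$ is $G(i\lambda+\Dd)$, so $\tnablax\Dd^+$ contributes $\dot G(i\lambda+\Dd)+G\dot\Dd$ at the boundary. The $\dot\Dd$ pieces should integrate to zero by oddness, or pair into a total derivative in $x$. The $G^*\dot G$ piece should survive. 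Evaluating the $\lambda$-integral by the same Beta-function identity should give exactly $-\frac{\Gamma(\frac{s+1}{2})}{4\sqrt{\pi}}\Tr((\Dd^2)^{-\frac{s+1}{2}}\Dd G^*\dot G)$. Finally, convergence as meromorphic functions of $s$ follows because every identity holds for $\Re s$ large, uniformly in $t$, and both sides continue meromorphically.
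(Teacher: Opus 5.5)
Your plan is essentially the paper's proof: the $t^{-1}$ coefficient from Theorem~\ref{thmTrace} vanishes by oddness in $\xi$, and the $t^{0}$ coefficient comes from the first-order Duhamel correction $q_1(e^{-\tau\Dd_t^2})$ paired with $N(\Dd_t)$ and integrated in $\xi$. As in the paper, every reordering under $\bTr$ is paid for with the trace defect formula using $I(\Dd^+)=G(i\lambda+\Dd)$, which leaves $\int_{\S^1}\b\omega(s)$ plus a surviving $G^*\dot G$ term. The caveat is that what you leave as ``should'' is exactly the paper's computational core (Lemmas~\ref{lem_R1}, \ref{lem_R2R3} and Propositions~\ref{lem_bEta}, \ref{lem_bzeta}): there the defect terms are shown to add up to a multiple of $\int_{\S^1}\Tr\big((\Dd^2)^{-\frac{s+1}{2}}(G^*\dot G\Dd+\dot\Dd)\big)$, and the $\dot\Dd$ part drops out as a multiple of $\D_x\overline{\eta}(s,\D M)$ (not by oddness), just as the $[\tnablax,\Dd^-\Dd^+]$ term from converting $\dot\Dd^-\Dd^+$ into $\Dd^-\dot\Dd^+$ drops out as a multiple of $\D_x\,{}^b\overline{\zeta}(s,\Dd^-\Dd^+)$, whose own trace defect vanishes by evenness in $\lambda$.
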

Within the integral of the last term, there is a modified eta function \eqref{bebintro} with an extra factor $G^{*} \dot{G}$. If $\Dd^+$ is a family of generalized Dirac operators, this term will vanish. Thus one can think of the last term as measuring the `non-Clifford' nature of the connection $\nabla$.
\\

%% file: abcalc.tex
\section{The Adiabatic $b$-Calculus}
\label{sec_abcalc}

\subsection{Adiabatic b-Geometry}
\label{sec_abgeo}
Let $M$ be a compact manifold with boundary, $B$ a closed manifold of dimension $n$, and suppose $\phi: M\to B$ is a fiber bundle with fibers diffeomorphic to a manifold with boundary $Y$ of dimension $m$. For simplicity, assume $\partial M$ and $\partial Y$ are connected. The map $\phi$ induces a map on the tangent spaces $d\phi: TM \to TB$. The \textit{vertical subbundle} of $TM$ is defined as
$T(M/B) :=\ker(d\phi).$

    An \textit{adiabatic $b$-vector field}, or \textit{$ab$-vector field} for short, is a smooth family of $b$-vector fields
    $v:[0,\infty) \to \mathcal{V}_b(M)$
    such that 
    $v(0) \in C^\infty(T(M/B)).$
    This last condition means that $v(0)$ is tangent to the fibers at each $x\in B$. We let $\mathcal{V}_{ab}(M)$ denote the space of $ab$-vector fields on $M$. These vector fields can be realized as sections of a vector bundle, the \textit{$ab$-tangent bundle} $^{ab}TM$ over $M\times \R_+$. We will also need the \textit{$ab$-cotangent bundle} $^{ab}T^*M := (^{ab}TM)^*$. 

Now, we will construct the \textit{$ab$-double space} $M^{2}_{ab}$. This will be analogous to the construction of $M^2_a$ as in \cite{MorS04} and \cite{HomologyAndAdiabatic}. Rather than blowing up the fiber diagonal $\Delta_\phi$ inside the space $M^2\times \R_+$, we will blow up the $b$-fiber diagonal inside the space $M^2_b\times \R_+$. Recall that for any fiber bundle $M \rightarrow{B}$, the fiber diagonal is given by the preimage of the diagonal of $B$ under the map $(\phi,\phi):M \times M \rightarrow{B \times B}$:
\begin{equation*}
    \Delta_\phi = (\phi,\phi)^{-1}\Delta_B.
\end{equation*}

To define the \textit{$b$-fiber diagonal}, we will take the above fiber diagonal and consider its pull-back under the blow down map $\beta:M^2_b \rightarrow{M\times M}$:
    \begin{equation*}
        \Delta_{\phi,b} := \overline{\beta_b^{-1}((\Delta_\phi) \setminus(\partial M)^2)}.
    \end{equation*}
Now we will blow up this diagonal in a way that is similar to what is done in the standard adiabatic case, 
\begin{equation*}
    M^2_{ab} := [M^2_b \times \R_+;\,\Delta_{\phi,b}\times\{0\} ].
\end{equation*}
The blow down map that will be the most important for us will be the composition $\beta_{ab}$ given by blowing down both the adiabatic and boundary blow ups that were performed.
\begin{equation*}
    \begin{tikzcd}
M^2_{ab} \arrow[rr,"\beta_{ab}"] \arrow[dr]
&& M^2 \times \R_+ \\
& M^2_b \times \R_+ \arrow[ur]
\end{tikzcd}
\end{equation*}
We define the ${ab}$-diagonal in this space as the pull-back of the $M$ diagonal,
\begin{equation*}
    \Delta_{ab} := \overline{\beta^{-1}_{ab}((\Delta_M \setminus (\partial M)^2) \times(0,\infty))}.
\end{equation*}
Within the space $M^2_{ab}$ there are several boundary hypersurfaces in general. The two corresponding to the two blow ups performed will be of special importance to us. These we denote by $\tf$ and $\ff$, for the \textit{temporal front face} and the \textit{boundary front face} respectively:
\begin{equation*}
    \tf := \overline{\beta^{-1}_{ab}(\Delta_\phi\setminus(\D M)^2)\times\{0\}},\qquad \ff:= \beta^{-1}_{ab}((\D M)^2 \times \R_+).
\end{equation*}

Finally, we need to define the appropriate density bundle from which our operators will take values in. First consider the line bundle $\Omega(^{ab}T^*M)$.
Let $\pi_R$ denote the projection $M \times M \to M$ onto the right factor.
    The \textit{$ab$-density bundle} is defined by considering two consecutive pullback bundles, $^{ab}\Omega := \beta^*_{ab} \pi^*_R \Omega(^{ab}T^*M)$. We can illustrate this more clearly with the following diagram.

\begin{equation*}
    \begin{tikzcd}
^{ab}\Omega \arrow[r] \arrow[d]
& \pi^*_R\Omega(^{ab}T^*M) \arrow[r] \arrow[d]
& \Omega(^{ab}T^*M) \arrow[d] \\
M^2_{ab} \arrow[r, "\beta_{ab}"]
& M^2 \times \R_+ \arrow[r, "\pi_R"]
& M \times \R_+
\end{tikzcd}
\end{equation*}

\subsection{Adiabatic b-Operators}
\label{sec_abops}

\begin{definition}[\textbf{Adiabatic b-Pseudodifferential Operators}]
    For each $j\in\R,$ the space $\Psi^j_{ab}(M)$ is the collection of $^{ab}\Omega$-valued distributions on $M^2_{ab}$ that:
    \begin{enumerate}
        \item[(i)] are classical conormal along $\Delta_{ab}$ of order $j$,
        \item[(ii)] are smooth up to the boundary hypersurfaces $\tf$ and $\ff$,
        \item[(iii)] are rapidly vanishing at all other boundary hypersurfaces of $M^2_{ab}$.
    \end{enumerate}
We will also need the collection of operators defined by $\Psi\ab^{j,k} := t^{-k}\Psi\ab^j$.
\end{definition}

Note that for a fixed $t$, the operators are standard $b$-operators in $\Psi_b(M)$. We will now proceed to give a very explicit, coordinate-dependent description of the key objects that have been defined so far. Away from the boundary, operators in $\Psi_{ab}$ are identical to the operators within the adiabatic calculus $\Psi_a$. Near the boundary, let us fix coordinates $(x,y,z)$ where $x\in \R^n$ represents the base coordinates, $y\in\R^{m-1}$ represents the interior coordinates of the fibers, and $z\in\R^+$ is the boundary coordinate. If $A\in\Psi^j_{ab}(M)$ and $u\in C^\infty(M)$, we can think of $A$ as an operator acting on $u$ via
\begin{equation*}
    Au(t,x,y,z) = \frac{1}{t^n}\int K(t,x,y,z,x',y',z') u(x',y',z')\,dx'dy'\frac{dz'}{z'},
\end{equation*}
where the pullback of $K$ by $\beta_{ab}$ is conormal along $\Delta_{ab}$. To find coordinates that are normal along $\Delta_{ab}$, we simply combine what is done in the $b$-calculus case and the adiabatic calculus case. We let
\begin{equation}
\label{eq_zetacoord}
    \zeta = \frac{x-x'}{t}, \qquad u = y-y', \qquad v=  \log\left(\frac{z}{z'}\right).
\end{equation}
Hence,
\begin{equation*}
    \beta_{ab}(t,x,y,z,\zeta,u,v) = (t,x,y,z,x-t\zeta,y-u,z e^{-v}).
\end{equation*}
Since $\Delta_M = \{x=x', y=y', z=z'\}$ in $M$, it follows that $\Delta_{ab} = \{\zeta = u = v = 0 \}$ in $M^2_{ab}$.

Since $\mathcal{K} := \beta^*_{ab}K$ is conormal along $\Delta_{ab}$, $Au$ can be locally expressed as
\begin{multline*}
    Au(t,x,y,z) = \\
    \frac{1}{t^n}\int e^{\frac{i}{t}(x-x')\cdot\xi} e^{i(y-y')\cdot\eta} \left(\frac{z}{z'}\right)^{i\lambda} a(t,x,y,z,\xi,\eta,\lambda) u(x',y',z')\,\dbar\xi\dbar\eta\dbar\lambda dx' dy' \frac{dz'}{z'}
\end{multline*}

The explicit symbol estimates are
\begin{equation*}
    |\D_\xi^I \D_\eta^J \D_\lambda^\alpha \D_x^K \D_y^L \D_z^\beta \D_t^\gamma a| \leq C(1+|\xi|+|\eta|+|\lambda|)^{j-|I|-|J|-\alpha},
\end{equation*}
uniformly in $x,y,z,$ and $t$.

Proving that $\Psi_{ab}(M)$ is graded and that composition is well-defined and respects the grading can be done by following the standard arguments. In particular, in order to prove that composition holds the \textit{$ab$-triple spaces} can be constructed. Details on the triple spaces for $\Psi_b$ can be found in \cite{GrieserBasics} and \cite{BMeR93}, and for $\Psi_a$ in \cite{MorS04}.

Of course, the most important examples of $ab$-operators will be $ab$-differential operators. An \textit{$ab$-differential operator} on $M$ of order $j\in\R$ is a sum of compositions of $ab$-vector fields
    \[A = \sum_{k=0}^N f_k v_{k,1}v_{k,2},\cdots v_{k,\ell_k}\]
    with $\ell_k \leq j$. The space of ${ab}$-differential operators of order $j$ is denoted $\diff\!\ab^j(M)$.
In local coordinates near the boundary, we can express $A\in\diff\!\ab^j(M)$ as
\begin{equation}
    A = \sum_{|I|+|J|+k \leq j} a_{I,J,k}(t,x,y,z)(t\D_x)^I(\D_y)^J(z\D z)^k,
\label{abdiff}
\end{equation}
where each $a_{I,J,k}$ is smooth in all its variables. The \textit{$ab$-principal symbol} $\absym(A)$ of an operator $A\in \Psi\ab^j$ is defined locally as the top homogeneous term of the classical expansion of the symbol of $A$.

For an $ab$-differential operator given by local coordinates as in \eqref{abdiff}, the principal symbol will have the form
\begin{equation}
    \absym(A)(t,x,y,z,\xi,\eta,\lambda) = \sum_{|I|+|J|+k = j}a_{I,J,k}(t,x,y,z)(i\xi)^I(i\eta)^J(i\lambda)^k.
    \label{absymb}
\end{equation}
As one would expect, the $ab$-principal symbol of an operator is globally well-defined as a function on $^{ab}T^*M$. We say an operator is \textit{elliptic} if it is invertible as a function on $^{ab}T^*M\setminus\{0\}$. Also, by following the standard arguments we have the following theorem.
\begin{theorem}
\label{thm_abSES}
    There exists a short exact sequence
    \begin{equation*}
        \begin{tikzcd}
0 \arrow[r]
& \Psi\ab^{j-1}(M) \arrow[r, hook]
& \Psi\ab^j(M) \arrow[r, "\absym"]
& S^{[j]}(^{ab}T^*M) \arrow[r]
& 0
\end{tikzcd}.
    \end{equation*}  
\end{theorem}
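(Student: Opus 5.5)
The plan is the standard symbol-map argument, adapted to the local description of $\Psi^j_{ab}(M)$ given above.

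\textbf{Step 1: $\absym$ is well-defined and invariant.} Choose a finite cover of $M^2_{ab}$ near $\Delta_{ab}$ by two kinds of charts: interior adiabatic charts, where $\Psi_{ab}$ coincides with $\Psi_a$, and boundary charts with the normal coordinates $(\zeta,u,v)$ of \eqref{eq_zetacoord}. In each chart a kernel in $\Psi^j_{ab}$ is the oscillatory integral with amplitude $a(t,x,y,z,\xi,\eta,\lambda)$ satisfying the stated symbol estimates uniformly up to $t=0$ and $z=0$. Classical conormality means $a$ has an expansion $a\sim\sum_k a_{j-k}$ with $a_{j-k}$ homogeneous of degree $j-k$. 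Set $\absym(A)=[a_j]$. Under a change of coordinates respecting the fibration and the boundary defining function, the variables $(\xi,\eta,\lambda)$ transform as the fiber coordinates of $^{ab}T^*M$ dual to the frame $t\partial_x,\partial_y,z\partial_z$. This is the key point: the blow-up coordinates $\zeta=(x-x')/t$ and $v=\log(z/z')$ are exactly the linearizations dual to that frame. Hence the usual change-of-variables formula for conormal distributions shows that $a_j$ transforms as a function on $^{ab}T^*M\setminus 0$, modulo lower order terms. This invariance should be verified at $\tf\cap\ff$ as well, where both rescalings are active.

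\textbf{Step 2: exactness in the middle.} If $\absym(A)=0$, then in every chart $a-a_j$ is a symbol of order $j-1$. Since the kernel is also smooth up to $\tf$ and $\ff$ and rapidly vanishing elsewhere, $A\in\Psi^{j-1}_{ab}$. Conversely, elements of $\Psi^{j-1}_{ab}$ clearly have vanishing order-$j$ symbol. Injectivity of the inclusion is trivial.

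\textbf{Step 3: surjectivity.} Given $\sigma\in S^{[j]}(^{ab}T^*M)$, cut $\sigma$ off near the zero section and write it as $\sum_i\phi_i\sigma$ using a partition of unity subordinate to the charts. Quantize each piece by the local oscillatory integral, multiplied by a cutoff $\chi(\zeta,u,v)$ supported near $\Delta_{ab}$. Summing, and using the Borel-type smoothness in $t$, gives a distribution conormal of order $j$ along $\Delta_{ab}$ with principal symbol $\sigma$.

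\textbf{Main obstacle.} The hardest step is checking that this quantization actually lies in $\Psi^j_{ab}$. Smoothness up to $\tf$ and $\ff$ follows because the cutoff is smooth in the blown-up coordinates $(\zeta,v)$. The rapid vanishing at the remaining faces holds because the support of $\chi$ meets only $\tf$, $\ff$ and the $t>0$ region near $\Delta_{ab}$. The remaining task, in Step 1, is to confirm that the symbol transforms correctly in the corner charts; this is routine once the dual-frame identification is made.
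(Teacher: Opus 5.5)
Your proposal is correct and is exactly the standard principal-symbol argument that the paper invokes without writing out; the paper only asserts that the theorem follows ``by following the standard arguments.'' The argument runs as follows: invariance via identifying $N^*\Delta_{ab}$ with ${}^{ab}T^*M$ through the normal coordinates $\zeta=(x-x')/t$, $u$, $v=\log(z/z')$; then exactness in the middle; then surjectivity by local quantization with a cutoff supported near $\Delta_{ab}$, which keeps the kernel away from the left, right and $\{t=0\}$ faces. One small point: the appeal to Borel-type summation in Step 3 is superfluous, since surjectivity only requires quantizing a single homogeneous symbol that is smooth up to $t=0$, with no asymptotic summation.
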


\subsection{The Normal Operator}
\label{sec_normalop}

Now we will introduce the normal operator of an adiabatic $b$-operator. In fact, there are \textit{two} families of normal operators present, one corresponding to $\tf$ and one corresponding to $\ff$. We will call the operators corresponding to the temporal front face the \textit{normal operators}, and we will call the operators corresponding to the boundary front face the \textit{indicial operators}.

\begin{definition}
    Let $\mathcal{K}$ be the distributional kernel on $M\ab^2$ of an operator $A\in\Psi\ab^j(M)$. The \textit{normal operator} of $A$ is given by the Fourier transform of the kernel $\mathcal{K}$ along the temporal front face $\tf$, while keeping all other variables fixed. In other words,
    \[N(A) := \mathcal{F(\mathcal{K
    |_{\tf}}}) = \int_{-\infty}^\infty e^{-i\zeta \cdot \xi} \mathcal{K}(0,x,y,z,\zeta,u,v)\, d\zeta.\]
\end{definition}
This integral is always absolutely convergent since $\mathcal{K}$ is assumed to be smooth on $\tf$ and to vanish rapidly as $|\zeta| \to \infty$.

An important example is the normal operator of an $ab$-differential operator $A \in \diff\!\ab^j(M)$. Recall from \eqref{abdiff} that in local coordinates the derivatives in the base directions vanish as $t\to 0$. For simplicity, we will omit the fiber variables and write
\begin{equation*}
    A = \sum_{|I|\leq j}b_I(t,x)(t\D_x)^I,
    \label{abdiff td_x}
\end{equation*}
where the smooth functions $b_I(t,x)$ take values in the space of $b$-differential operators on the fiber $M_x$. It follows that
\begin{equation}
    \begin{split}
    N(A)(x,\xi) &= \sum_{|I| \leq j}b_I(0,x)(i\xi)^I.
                \label{Normal D_x}
    \end{split}
\end{equation}
The following theorem is clear from the definition of $N$:
\begin{theorem}
\label{thm_normalSES}
    Recall that $\Psi\ab^{j,k}(M) = t^{-k}\Psi\ab^j(M)$. There exists a short exact sequence
    \begin{equation*}
        \begin{tikzcd}
            0 \arrow[r]
            & \Psi\ab^{j,k-1} \arrow[r, hook]
            & \Psi\ab^{j,k} \arrow[r, "N"]
            & S^{j}(T^*B; \Psi_b^j(Y)) \arrow[r]
            & 0
    \end{tikzcd}.
    \end{equation*}
\end{theorem}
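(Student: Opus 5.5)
The short exact sequence will follow by identifying $N$ with restriction of the (rescaled) kernel to $\tf$, followed by the Fourier transform in the base fiber variable $\zeta$. First I would reduce to $k=0$. Multiplication by $t^{-k}$ is an isomorphism $\Psi\ab^{j,0}\to\Psi\ab^{j,k}$, and on $\Psi\ab^{j,k}$ the map $N$ is understood as $N(t^{-k}A):=N(A)$, i.e.\ it is applied to $t^{k}$ times the kernel. So it suffices to show exactness of $0\to t\Psi\ab^j\to\Psi\ab^j\xrightarrow{N} S^j(T^*B;\Psi^j_b(Y))\to 0$.

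Next I would describe $\tf$ concretely. By construction $\tf$ is the front face of the blow-up of $\Delta_{\phi,b}\times\{0\}$ in $M^2_b\times\R_+$. Its interior is therefore the inward normal bundle, a bundle of half-spaces over $\Delta_{\phi,b}$. After radial compactification in $\zeta$, this is fibred over $B$ with fibre $\R^n_\zeta\times Y^2_b$, where $Y^2_b$ is the $b$-double space of the fibre. In the coordinates \eqref{eq_zetacoord}, with $t$ a defining function for $\tf$, a kernel $\mathcal K$ in $\Psi\ab^j$ restricts to $\mathcal K|_{\tf}(x,\zeta;y,z,u,v)$. This restriction is Schwartz in $\zeta$, conormal of order $j$ along $\{\zeta=u=v=0\}$, smooth up to $\ff\cap\tf$, and rapidly vanishing at the remaining faces of $Y^2_b$. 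Fourier transform in $\zeta$ turns conormality of order $j$ jointly in $(\zeta,u,v)$ into a symbol in $\xi$ of order $j$ with values in $\Psi_b^j(Y)$, the joint estimate being exactly the one displayed for $a$. This defines $N$ into $S^j(T^*B;\Psi^j_b(Y))$ (classical, since the kernel is). For differential operators it reproduces \eqref{Normal D_x}, which is a useful consistency check.

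Exactness in the middle and injectivity are then the standard fact for a smooth-up-to-the-boundary conormal distribution. Restriction to $\tf$ vanishes iff $\mathcal K=t\mathcal K'$ with $\mathcal K'$ of the same class; here one uses Taylor's formula in $t$ near $\tf$, and notes that $t$ is a global defining function for $\tf$ away from the other faces, where the kernel already vanishes rapidly. One has to check that dividing by $t$ preserves smoothness at $\ff$ and the rapid decay elsewhere, which it does because $t$ is a product of defining functions and the kernel is rapidly vanishing at the non-front faces. Since the Fourier transform is injective, $\ker N=t\Psi\ab^j$.

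Surjectivity is the step I expect to need the most care. Given $a(x,\xi)\in S^j(T^*B;\Psi_b^j(Y))$, I would take its inverse Fourier transform in $\xi$. This gives a distribution on $\tf$ conormal along the diagonal and, by symbol decay in $\xi$ after a cutoff, Schwartz in $\zeta$ modulo a conormal singularity at $\zeta=0$. The joint conormality in $(\zeta,u,v)$ is the delicate point: the symbol estimates must be uniform and joint in $(\xi,\eta,\lambda)$, so I would use that $\Psi_b^j(Y)$-valued symbols of order $j$ are assumed in this joint sense. I would then extend off $\tf$ using a product decomposition $[0,\epsilon)_t\times\tf$ near $\tf$, cut off by $\chi(t)$ and pushed into the kernel class. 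The extension is compatible with $\ff$ because the collar can be chosen to respect $\ff$, which meets $\tf$ transversally. This produces $A\in\Psi\ab^j$ with $N(A)=a$, completing the sequence.
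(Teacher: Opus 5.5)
Your proposal is correct, and it is the standard argument the paper compresses into the remark that the sequence is ``clear from the definition of $N$'': restrict the (rescaled) kernel to $\tf$ and Fourier transform in $\zeta$; identify $\ker N$ with $t\Psi^{j}_{ab}$ by Taylor expansion at $\tf$ (since $t$ vanishes only at $\tf$ and at the lift of $M^2_b\times\{0\}$, where kernels already vanish rapidly); and obtain surjectivity by extending a kernel off $\tf$. Reading $S^j(T^*B;\Psi^j_b(Y))$ as jointly symbolic (suspended) families is exactly what surjectivity needs, and the only slip is cosmetic: $\tf$ is the inward spherical normal bundle of $\Delta_{\phi,b}\times\{0\}$, whose fibres are closed $n$-dimensional hemispheres with interiors $\R^n_\zeta$, not a bundle of half-spaces.
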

It is also a straightforward exercise to show that $N(AB) =  N(A)N(B)$.

\begin{proposition}
\label{prop_bsymnormal}
    For $A\in \Psi\ab^j(M),$
    \begin{equation*}
        ^b\sigma(N(A)) =\, \absym(A)|_{t=0},
    \end{equation*}
    where the $b$-symbol is taken with the base variables and covariables fixed. 
\end{proposition}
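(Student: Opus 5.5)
We will prove the identity by writing out both sides in the local coordinates of \eqref{eq_zetacoord}. Since both sides are well defined globally, it suffices to check it in a coordinate patch near the boundary; away from the boundary the statement is the known one for the adiabatic calculus $\Psi_a$.

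Write the Schwartz kernel of $A\in\Psi\ab^j(M)$ as the oscillatory integral given above. Pull it back by $\beta_{ab}$ and use $(x-x')/t=\zeta$, $u=y-y'$, $v=\log(z/z')$. The kernel then becomes
\begin{equation*}
\mathcal{K}(t,x,y,z,\zeta,u,v)=\int e^{i\zeta\cdot\xi}e^{iu\cdot\eta}e^{iv\lambda}\,a(t,x,y,z,\xi,\eta,\lambda)\,\dbar\xi\,\dbar\eta\,\dbar\lambda ,
\end{equation*}
multiplied by the appropriate $ab$-density factor. The factor $t^{-n}$ is absorbed by $dx'=t^n d\zeta$. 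The symbol estimates are uniform in $t$ down to $t=0$, so $\mathcal{K}|_{\tf}$ is obtained simply by setting $t=0$ in $a$.

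Next we take the Fourier transform in $\zeta$, as in the definition of $N$. By Fourier inversion, $N(A)(x,\xi)$ is the $b$-pseudodifferential operator on the fiber $M_x$ whose kernel in $(u,v)$ is
\begin{equation*}
\int e^{iu\cdot\eta}e^{iv\lambda}\,a(0,x,y,z,\xi,\eta,\lambda)\,\dbar\eta\,\dbar\lambda .
\end{equation*}
So, with the base point $x$ and covariable $\xi$ held fixed, $N(A)(x,\xi)$ has full $b$-symbol $(y,z,\eta,\lambda)\mapsto a(0,x,y,z,\xi,\eta,\lambda)$. Its $b$-principal symbol is therefore the top homogeneous term of this symbol in the fiber variables.

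The only real issue is to state precisely what "homogeneous" means here. We view $N(A)$ as a symbol on $T^*B$ with values in $\Psi_b^j(Y)$, as in Theorem \ref{thm_normalSES}. Its $b$-symbol, taken jointly in $(\xi,\eta,\lambda)$ with the base variables and covariables fixed, is the top homogeneous part of $a(0,\cdot)$ in $(\xi,\eta,\lambda)$. This is exactly $\absym(A)|_{t=0}$ by the definition of the $ab$-principal symbol as the leading term of the classical expansion.

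We will confirm that the convention matches on $ab$-differential operators. Using \eqref{Normal D_x} and \eqref{absymb}, $N(A)=\sum b_I(0,x)(i\xi)^I$. Taking the $b$-symbol of each coefficient $b_I(0,x)$ and keeping the terms of total order $j$ gives \eqref{absymb} at $t=0$.

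Two small checks remain. First, restricting to $t=0$ commutes with taking the leading homogeneous term; this holds because the classical expansion of $a$ is smooth in $t$. Second, the result does not depend on the choice of coordinates. This follows from Theorem \ref{thm_abSES} together with the invariance of the $b$-symbol.
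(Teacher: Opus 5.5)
Your proposal is correct and follows essentially the same route as the paper. You restrict the local oscillatory-integral kernel to $\tf$ by setting $t=0$ in the symbol, Fourier transform in $\zeta$ to get the fiberwise $b$-operator with full symbol $a(0,x,y,z,\xi,\eta,\lambda)$, read off its top homogeneous term, and cross-check the differential-operator case against \eqref{absymb} and \eqref{Normal D_x}. Your explicit remark that the $b$-symbol of $N(A)$ must be taken as jointly homogeneous in $(\xi,\eta,\lambda)$, rather than in $(\eta,\lambda)$ alone at frozen $\xi$, pins down a point the paper leaves implicit; that joint reading is exactly what Proposition \ref{prop_invert} later relies on.
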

\begin{proof}
    For $ab$-differential operators this is obvious in light of \eqref{absymb} and \eqref{Normal D_x}. For a general $ab$-operator, the kernel restricted to $\tf$ is given by
    \begin{equation*}
        \int e^{\frac{i}{t}(x-x')\cdot\xi} e^{i(y-y')\cdot\eta} \left(\frac{z}{z'}\right)^{i\lambda} a(0,x,y,z,\xi,\eta,\lambda)\,\dbar\xi\dbar\eta\dbar\lambda.
    \end{equation*}
    To find $N(A)$, we take the Fourier transform in $\zeta = \frac{1}{t}(x-x'),$
    \begin{equation*}
        \int e^{i(y-y')\cdot\eta} \left(\frac{z}{z'}\right)^{i\lambda} a(0,x,y,z,\xi,\eta,\lambda)\,\dbar\eta\dbar\lambda.
    \end{equation*}
    For a fixed $(x,\xi)$, $^b\sigma(N(A)) = a_j(0,x,y,z,\xi,\eta,\lambda)$, the top homogeneous term of the classical symbol $a$. But this is precisely $\absym(A)|_{t=0}$.
\end{proof}

The normal operator acts like a principal symbol for the kernels at the temporal front face. In our study of the adiabatic limit, we will also need a corresponding \textit{full symbol} at the temporal front face. We denote this symbol by $q$. The idea is to take the Fourier transform of each coefficient function of the Taylor series of the kernel at $\tf$, as opposed to taking the Fourier transform of just the constant term at $\tf$ as we did for $N$.

\begin{definition}
    Suppose $B = \S^1$ and suppose $x$ is the standard coordinate function that parametrizes $\S^1$. Let $\xi$ be its dual variable. Recall from equation \eqref{eq_zetacoord} that $\zeta := \frac{1}{t}(x -x').$ Let $A$ be an $ab$-operator given by the kernel
    \begin{equation*}
        \mathcal{K}(t,x,\zeta) = \int e^{i\zeta\cdot\xi}q(t,\zeta,\xi)\,\dbar\xi d\zeta,
    \end{equation*}
    with $q$ being a classical symbol in $\xi$ which takes on values in the space of $b$-operators on the fiber $Y$. By definition, $q$ is smooth at $\tf$ and has expansion
    \begin{equation*}
        q(t,\zeta,\xi) \sim q_0(\zeta,\xi)+ q_1(\zeta,\xi)t + q_2(\zeta,\xi)t^2 \cdots.
    \end{equation*}
    We define the \textit{full temporal symbol} $q(A)$ by the symbol described above. Note that the constant term $q_0(\zeta,\xi)$ is just the normal operator $N(A)$.
\end{definition}

The main property of the full symbol that we will use is the following composition formula. This is analogous to \cite[Lemma~15]{MorS04} for the adiabatic case.
\begin{theorem}
\label{thm_fullsymcomp}
    Let $A,B\in \Psi\ab(M)$. Then
    \begin{equation*}
        q(AB) \sim  \sum_{\alpha}\frac{(-i)^{|\alpha|}}{\alpha!}(\partial_\xi)^\alpha q(A) \cdot(t\partial_x)^\alpha q(B).
    \end{equation*}
\end{theorem}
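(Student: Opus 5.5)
This composition formula can be proved as a Moyal-type expansion in the base variable only, treating the fiber directions as operator-valued coefficients. The justification for this is that, near $\tf$, the fiber part of the kernel is a $b$-operator on $Y$ and composes by the $b$-calculus, while only the base variable $x$ is rescaled by $t$. Concretely, I write $A$ and $B$ in the form used in the definition of the full temporal symbol:
\begin{equation*}
Au(x)=\frac1t\int e^{\frac{i}{t}(x-x')\xi}\,q(A)(t,x,\xi)\,u(x')\,\dbar\xi\, dx',
\end{equation*}
with $q(A)$ a classical symbol in $\xi$ valued in $\Psi_b(Y)$ and smooth in $t$ down to $t=0$. I use the base point $x$ as the spatial argument, which is equivalent to $\zeta$-dependence via the coordinate change \eqref{eq_zetacoord}. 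Substituting $\xi = t\xi'$ shows that this is an ordinary left-quantized pseudodifferential operator in $x$ with operator-valued symbol $q(A)(t,x,t\xi')$.

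The first step is to apply the standard composition theorem for left quantizations with values in an algebra. Here the algebra is $\Psi_b(Y)$, and the products are taken in the $b$-calculus on the fiber. This gives the symbol of $AB$ in the variables $(x,\xi')$:
\begin{equation*}
\sum_\alpha \frac{(-i)^{|\alpha|}}{\alpha!}\,\partial_{\xi'}^\alpha\big[q(A)(t,x,t\xi')\big]\cdot \partial_x^\alpha\big[q(B)(t,x,t\xi')\big].
\end{equation*}
Undoing the rescaling converts $\partial_{\xi'}^\alpha$ into $t^{|\alpha|}\partial_\xi^\alpha$, and I assign the factor $t^{|\alpha|}$ to the second factor as $(t\partial_x)^\alpha$. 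This is exactly the claimed formula. As a consistency check, the $\alpha=0$ term at $t=0$ reproduces $N(AB)=N(A)N(B)$.

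The main obstacle is showing that this is an asymptotic expansion in the right sense: the $N$-th remainder must lie in $t^{N}\Psi_{ab}$, uniformly as $t\to0$, and not merely be of lower order in $\xi$. I would handle this with the usual Taylor-expansion proof. Write the composed symbol as an oscillatory integral
\begin{equation*}
\int e^{-i w\cdot\theta}\, q(A)(t,x,\xi+\theta)\, q(B)(t,x+tw,\xi)\,\dbar\theta\, dw,
\end{equation*}
and Taylor expand $q(B)(t,x+tw,\xi)$ in $tw$ to order $N$. Integration by parts then turns the powers of $w$ into $\theta$-derivatives of $q(A)$, producing the finite sum. The remainder carries an explicit factor $t^N$ together with an integral of the same form involving $(t\partial_x)^\alpha q(B)$ with $|\alpha|=N$. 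That integral is a symbol of order $\operatorname{ord}A+\operatorname{ord}B-N$, with bounds uniform in $t$, because all $t$-derivatives of $q$ are controlled by the $ab$ symbol estimates.

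Two issues remain. The first is that the fiber-valued symbols are $b$-operators rather than scalars, so the estimates must be taken in the seminorms of $\Psi_b(Y)$. The composition in $\Psi_b(Y)$ is continuous in these seminorms, and this is the reason the $b$-structure does not interfere. The second is the contribution of the part of the kernel away from $\tf$, where kernels vanish rapidly. That part contributes only $O(t^\infty)$ to $q$, so it does not affect the asymptotic expansion. Finally, Borel summation together with Theorem \ref{thm_normalSES}, applied iteratively in $k$, identifies the resulting formal series with $q(AB)$ modulo $t^\infty$.
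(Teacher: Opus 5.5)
Your proposal is correct and follows the same route as the paper, whose proof simply invokes the classical symbol composition theorem together with the identification $\partial_\zeta = t\partial_x$. Your rescaling $\xi = t\xi'$, with $\partial_{\xi'}^\alpha = t^{|\alpha|}\partial_\xi^\alpha$, is that same observation written on the covariable side, and your Taylor-expansion remainder argument supplies the uniformity in $t$ that the paper leaves implicit.
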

\begin{proof}
    The classical symbol composition theorem is well known and the theorem follows from this and recognizing that $\partial_\zeta = t\partial_x$.
\end{proof}
Now we turn to defining the $b$-trace functional. Following \cite{BMeR93}, we begin by defining an extension of the integral. For all $\phi\in C^\infty(M;\,^b\Omega)$, the following \textit{$b$-integral} is well-defined:
\begin{equation*}
     \bint_M \phi := \lim_{\epsilon \to 0} \left(  \int_{z>\epsilon}\phi + \log{\epsilon} \int_{\partial M} \phi|_{\partial M}    \right).
\end{equation*}
Note that the $b$-integral depends on the boundary defining function chosen. Locally, the $b$-integral of a $b$-density $\phi = a(x,y,z)\,dxdy \frac{dz}{z}$ can be represented by
\begin{equation*}
    \int(a(x,y,z) - a(x,y,0))\,dxdy\frac{dz}{z}.
\end{equation*}
We can now define the \textit{$b$-trace} for an operator $A\in \Psi^{-\infty}_b(M)$ as follows:
\begin{equation*}
    \bTr(A) := \bint_M{(A|_{\Delta_b})}.
\end{equation*}
\begin{theorem}
\label{thmTrace}
    Suppose the base space is the circle $B = \S^1$ and suppose $x$ is the standard coordinate function that parametrizes $\S^1$. Let $\xi$ be its dual variable. Let $A\in \Psi\ab^k$ with $k<-\dim{M}$. Then 
    \begin{equation*}
        \lim_{t\to 0}  t\ \b\Tr(A) = \int_{T^*B} \b\Tr(N(A)).
    \end{equation*}
\end{theorem}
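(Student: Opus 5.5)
Restrict the $b$-trace to the diagonal and use the adiabatic coordinates \eqref{eq_zetacoord}. Since $k<-\dim M$, the operator $A$ is, for each $t>0$, a $b$-operator whose kernel is continuous across the $b$-diagonal. Hence $\bTr(A)$ is the $b$-integral of the restriction of the kernel to $\Delta_b$. In the local coordinates $(x,y,z)$ near the boundary, with $B=\S^1$ so that $n=1$, the kernel of $A$ on $M\times M$ is $t^{-1}K(t,x,y,z,x',y',z')$, where $\mathcal{K}=\beta_{ab}^*K$ is smooth up to $\tf$ and $\ff$. On the diagonal we have $\zeta=u=v=0$, so
\[
t\,\bTr(A)=\bint_M \mathcal{K}(t,x,y,z,0,0,0)\,dx\,dy\,\frac{dz}{z},
\]
globally after a partition of unity on $M$. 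Away from the boundary the same formula holds with the ordinary integral, and the argument is Moroianu's adiabatic one \cite{MorS04}. The factor $t$ in the statement exactly cancels the $t^{-n}$ normalization coming from the $ab$-density. This is the first step.

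The second step is the limit $t\to0$. The function $t\mapsto \mathcal{K}(t,\cdot,0,0,0)$ is smooth in $t\in[0,\infty)$ with values in smooth functions on $M$ (smooth up to $z=0$). This holds because $\Delta_{ab}$ meets $\tf$ transversally and the kernel is smooth up to $\tf$, and continuous on $\Delta_{ab}$ since the order is below $-\dim M$. The $b$-integral is a continuous linear functional on $C^\infty(M;{}^b\Omega)$: locally it is $\int (a(x,y,z)-a(x,y,0))\,dx\,dy\,dz/z$, and the difference quotient $(a(z)-a(0))/z$ is controlled by $\partial_z a$ uniformly. Therefore we may pass to the limit inside, which gives
\[
\lim_{t\to0} t\,\bTr(A)=\bint_M \mathcal{K}(0,x,y,z,0,0,0).
\]

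The third step identifies the right-hand side with the normal operator. By definition, $N(A)(x,\xi)$ is the Fourier transform in $\zeta$ of $\mathcal{K}|_{\tf}$. Fourier inversion at $\zeta=0$ gives $\mathcal{K}(0,x,y,z,0,u,v)=\int N(A)(x,\xi)(y,z,u,v)\,\dbar\xi$, which is absolutely convergent. The reason is that $N(A)\in S^{k}(T^*B;\Psi_b^k(Y))$ by Theorem \ref{thm_normalSES}, and its fiber trace norm decays like $\langle\xi\rangle^{k+\dim Y}$, which is integrable because $k+\dim Y<-1$. Setting $u=v=0$ gives the kernel of $N(A)(x,\xi)$ on the $b$-diagonal of the fiber $Y$. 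We then use Fubini: integrate over $\xi$ and over the fiber $b$-integral, then over $x$. This swap needs a uniform bound in $\xi$ on the boundary correction term $a(x,y,0)$ in the $b$-integral, and that bound comes from the same symbol estimates applied to the indicial part at $\ff$. The result is $\int_{T^*B}\bTr(N(A))$, where $\bTr$ is now the fiber $b$-trace and the symplectic measure $dx\,\dbar\xi$ absorbs the $2\pi$.

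I expect the main obstacle to be justifying the interchange of $\lim_{t\to0}$, the $b$-regularization and the $\xi$-integration near $\ff\cap\tf$. I would handle this by working on the corner of $M^2_{ab}$ in the coordinates $(t,x,y,z,\zeta,u,v)$ and using the explicit symbol estimates uniform in $t$. These show that $(x,\xi)\mapsto \bTr(N(A)(x,\xi))$ is integrable and that the error $\mathcal{K}(t)-\mathcal{K}(0)=O(t)$ holds in a norm, $\sup$ of $|a|+|z\partial_z a|$, that dominates the $b$-integral.
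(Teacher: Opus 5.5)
Your proposal is correct and follows essentially the same route as the paper. Both localize and write $t\,\bTr(A)$ as the $b$-integral of the kernel's diagonal value. That value is $\int a\,\dbar\xi\,\dbar\eta\,\dbar\lambda$, so your Fourier-inversion step is the paper's symbol computation in kernel language. Both then pass $t\to 0$ inside and use Fubini to split off the $T^*B$ integral, with your version spelling out the dominated-convergence justifications the paper leaves implicit. One small slip: the quantity that dominates the $b$-integral is $\sup(|a|+|\partial_z a|)$, as you correctly say in your second step, not $\sup(|a|+|z\partial_z a|)$ as written in your last paragraph, since a bound on $z\partial_z a$ alone does not control $\int_0^1 |a(z)-a(0)|\,dz/z$.
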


\begin{proof}
    The condition $k < -\dim{M}$ is to guarantee that the following integrals are well-defined. Using a partition of unity, it suffices to prove the theorem in local coordinates. Observe that
    \begin{equation*}
        \begin{split}
            \lim_{t\to 0} t \b\Tr(A)                    &= \lim_{t \to 0} \bint_M \int a(t,x,y,z,\xi,\eta,\lambda)\,\dbar\xi\dbar\eta\dbar\lambda dxdy\frac{dz}{z}.
        \end{split}     
    \end{equation*}
    Using the local description of the $b$-integral, this is equivalent to
    \begin{align*}
    \lim_{t\to 0} t \b\Tr(A) &=
        \lim_{t\to 0}   \int a(t,x,y,z,\xi,\eta,\lambda) - a(t,x,y,0,\xi,\eta,\lambda)
        \,\dbar\xi\dbar\eta\dbar\lambda dxdy\frac{dz}{z} \\
        &= \int a(0,x,y,z,\xi,\eta,\lambda) - a(0,x,y,0,\xi,\eta,\lambda)
        \,\dbar\xi\dbar\eta\dbar\lambda dxdy\frac{dz}{z} \\
        &= \int\left( \int a(0,x,y,z,\xi,\eta,\lambda) - a(0,x,y,0,\xi,\eta,\lambda)\, \dbar\eta \dbar\lambda dy\frac{dz}{z} \right)\dbar\xi dx.
    \end{align*}
    But the integrand is just the $b$-integral of the normal operator along the fiber $Y$,
    \begin{align*}
        \int_{T^*B} \left( \bint_Y \left( \int a(0,x,y,z,\xi,\eta,\lambda)\,\dbar\eta\dbar\lambda \right) dy\frac{dz}{z} \right)\, dx \dbar\xi 
        = \int_{T^*B} \b\Tr(N(A)).
    \end{align*}
\end{proof}
We end this section with the comment that one can generalize the content above to construct $\Psi_{ab}(M;E)$ for some vector bundle $E$. Likewise, the extension of the calculus to include complex orders can also be done. Both extensions can be carried out in the standard way, which we omitted for notational simplicity.

%% file: witten.tex
\section{The Global Anomaly Formula for Manifolds with Boundary}
\label{sec_witten}

We will assume that $\phi:M\to B$ is a fiber bundle and $M$ is a manifold with boundary. In this section the base space will be the circle $B = \S^1$. The fibers are manifolds with boundary and will be represented by $Y$. Let $E\to M$ be a Hermitian vector bundle over $M$ where $E = E^+ \oplus E^-$. Let $\Dd^+\in \diff^1_{b,\phi}(M;E^+,E^-)$ be a family of \textit{invertible}, first order, $b$-differential operators. Thus for each $x\in B$,
\begin{equation*}
    \Dd_x^+: C^\infty(M_x;E^+|_{M_x}) \to C^\infty(M_x; E^-|_{M_x})
\end{equation*}
is an invertible, first order, $b$-differential operator on the fibers $M_x \cong Y$. Let $\Dd^- = (\Dd^+)^*$. Let
\begin{equation*}
    \nabla: C^\infty(M;E) \to C^\infty (M; T^*M \otimes E) 
\end{equation*}
be a unitary $\Z_2$-graded connection on the vector bundle $E\to M$ and let
\begin{equation*}
    \tnabla: C^\infty(M;E) \to C^\infty(M, \phi^*T^*B \otimes E)
\end{equation*}
be the induced connection as constructed, for example, in \cite{MorS04}. Let $\D_x$ be the standard angular vector field on $\S^1$.

We assume the following Dirac condition on the indicial operator of $\Dd^+$: the indicial operator of $\Dd^+$ can be written as
\begin{equation*}
\label{DdnearM} I(\Dd^+) = iG \lambda  + P,
\end{equation*}
where $P \in
\mathrm{Diff}^1_{b,\phi|_{\partial M}}(\partial M,E^+,E^-)$, and the map $G: E^+|_{\partial M} \to E^-|_{\partial M}$ is a unitary bundle map satisfying
\begin{equation*} \label{GB}
G^* P = P^* G.
\end{equation*}
These conditions guarantee that
\[
I(\Dd^- \Dd^+)(\lambda) = \lambda^2 + P^* P.
\]
Alternatively, we have
\begin{equation*}
\label{DH} I(\Dd^+) = G\, \big( i \lambda + \Dd \big)
\end{equation*}
where
$\Dd  = G^* P\in \mathrm{Diff}^1_{b,\phi|_{\partial M}}(\partial M,E^+|_{\partial M})$,
and is formally self-adjoint and invertible on the fibers. We shall call $\Dd$ the \emph{induced} operator on $\partial M$. Since $\Dd$ is Fredholm; we can form its \emph{eta function}. From \cite{LauR-MorS02,LoP01,LoPII01}, we know that this defines a smooth function on the base $\S^1$ and is a meromorphic function of  $s \in \C$ with at most simple poles at $\dim Y, \dim Y - 1, \dim Y - 2,\ldots$; using the arguments from \cite{refv} it also follows that the origin is always a regular value. We can define the normalized eta function of the boundary:
\begin{equation*} \label{betadef}
\overline{\eta}(s, \D M) = \frac{\Gamma\left( \frac{s + 1}{2}
\right)}{\sqrt{\pi}} \Tr \Big( (\Dd^2)^{-
\frac{s + 1}{2}} \Dd \Big),
\end{equation*}

in which we take the fiber-wise trace for each $x\in\S^1$. In the above expression and for what follows, we will use the heat operator to represent complex powers. Details on this can be found in \cite{complex powers}. For $s\in\C$ with $\Re(s) > 0$,
\begin{equation*}
    (\Dd^2)^{-s} = \frac{1}{\Gamma(s)} \int_0^\infty \t^s e^{-\t \Dd^2}\,\frac{d\t}{\t}.
\end{equation*}

\subsection{The $\Dd_t$ Operator}
For $t\geq0$ define an operator $\Dd_t: \dot{C}^\infty(M;E) \to \dot{C}^\infty(M;E)$ by
\begin{equation*}
    \Dd_t = \begin{pmatrix}
        -it\tnabla_{\D_x} & \Dd^- \\
        \Dd^+             & it\tnabla_{\D_x}
    \end{pmatrix},
    \label{Dd_t}
\end{equation*}
where the block matrix form is with respect to the grading $E=E^+ \oplus E^-$.  $\Dt$ is a formally self-adjoint, elliptic operator in $\diff_{ab}^1(M,E)$.

The basic argument in the following proposition has been adapted from \cite[Proposition~19]{MorS04}.
\begin{proposition}
\label{prop_invert}
    There exists an $\epsilon >0$ such that $\Dd_t: H^1_b(M;E) \to L^2_b(M;E)$ is invertible for all $0 < t< \epsilon$.
\end{proposition}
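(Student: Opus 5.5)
The strategy is to build a parametrix for $\Dd_t$ in the $ab$-calculus by inverting both its principal symbol and its normal operator, and to show the resulting error is small in operator norm for small $t$. This follows \cite[Proposition~19]{MorS04}, but we must also keep track of the boundary front face $\ff$.

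\textbf{Normal operator.} By \eqref{Normal D_x},
\[
N(\Dd_t)(x,\xi)=\begin{pmatrix} \xi & \Dd^-_x\\ \Dd^+_x & -\xi\end{pmatrix}.
\]
Its square is $\mathrm{diag}(\xi^2+\Dd^-_x\Dd^+_x,\ \xi^2+\Dd^+_x\Dd^-_x)$, since the cross terms $\xi\Dd^--\Dd^-\xi$ vanish. Each $\Dd^+_x$ is invertible on the fibers, and $\S^1$ is compact, so there is $c>0$ with $\Dd^-_x\Dd^+_x\ge c$ and $\Dd^+_x\Dd^-_x\ge c$ uniformly in $x$. Hence $N(\Dd_t)(x,\xi)$ is invertible as a map $H^1_b(Y)\to L^2_b(Y)$ for every $(x,\xi)\in T^*B$. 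The inverse is $N(\Dd_t)(\xi^2+\Dd^\mp\Dd^\pm)^{-1}$.

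\textbf{Symbolic structure of the inverse.} Next we check that this inverse is a symbol in $S^{-1}(T^*B;\Psi_b^{-1}(Y))$ with the correct behaviour at $\ff$, i.e.\ that it lies in the full $b$-calculus at the fiber level. The point is that the indicial family $I(\Dd^-\Dd^+)(\lambda)=\lambda^2+P^*P$, after adding $\xi^2$, is invertible for real $\lambda$. Its inverse therefore has no poles on the real line, so the $b$-inverse is well behaved. This uniform parameter-dependent invertibility, including large $|\xi|$ together with the boundary indicial data, is the step I expect to be the main obstacle. I would handle it by treating $\xi$ as a parameter in a parameter-dependent $b$-calculus, or by writing $(\xi^2+\Dd^-\Dd^+)^{-1}$ via the functional calculus, and using Proposition~\ref{prop_bsymnormal} to match its $b$-symbol with the inverse of $\absym(\Dd_t)|_{t=0}$.

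\textbf{Parametrix and Neumann series.} Using Theorem~\ref{thm_abSES}, I first pick $Q_1\in\Psi^{-1}_{ab}$ with $\Dd_tQ_1=I-R_1$ and $R_1\in\Psi^{-\infty}_{ab}$; this step uses ellipticity. Then, by Theorem~\ref{thm_normalSES} and the multiplicativity $N(AB)=N(A)N(B)$, I correct $Q_1$ by an element whose normal operator is $N(\Dd_t)^{-1}N(R_1)$. Together with the boundary control from the previous step, this gives $Q\in\Psi^{-1}_{ab}$ with
\[
\Dd_tQ=I-R,\qquad R\in t\,\Psi^{-\infty}_{ab}\ \text{(vanishing at }\tf).
\]
The $ab$-operators of order $0$ are uniformly bounded on $L^2_b$ as $t\to0$ by standard arguments. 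Hence $\|R\|_{L^2_b\to L^2_b}\le Ct$, and $I-R$ is invertible by a Neumann series for $t<\epsilon$. This gives a right inverse $Q(I-R)^{-1}$ mapping $L^2_b$ to $H^1_b$, the $H^1_b$ bound coming from $Q\in\Psi^{-1}_{ab}$. Since $\Dd_t$ is formally self-adjoint, the same construction applied to the adjoint gives a left inverse, so $\Dd_t$ is invertible for $0<t<\epsilon$.
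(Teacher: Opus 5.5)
Your proposal is correct and follows the paper's argument, itself adapted from Moroianu's Proposition~19: invert $N(\Dd_t)$ using the positivity of $\Dd^\mp\Dd^\pm$, invert the $ab$-symbol, and finish with a Neumann series. The only difference is the order. You build the symbolic parametrix first and then correct at $\tf$, which leaves an error in $t\Psi_{ab}^{-\infty}$; the paper instead chooses $B_0$ with $N(B_0)=N(\Dd_t)^{-1}$ and symbol $\absym(\Dd_t)^{-1}$ (compatible by Proposition~\ref{prop_bsymnormal}), gets $R_0\in\Psi_{ab}^{-1,-1}$, and sums asymptotically to an error in $\Psi_{ab}^{-\infty,-\infty}$. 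The obstacle you single out, that $N(\Dd_t)^{-1}$ lies in the full rather than the small $b$-calculus on the fibers, is passed over in the paper by an appeal to Theorem~\ref{thm_normalSES}. Your remarks on uniform $L^2_b$-boundedness and on the left inverse via self-adjointness likewise make explicit what the paper leaves implicit.
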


\begin{proof}
    Since  $\Dd^+\Dd^-$ and $\Dd^-\Dd^+$ are self adjoint and positive, the diagonal entries of $N(\Dd_t^2)$ are invertible, hence $N(\Dd^2_t)$ is itself invertible. It follows that $N(\Dd_t)$ is also invertible.

    By Theorem~\ref{thm_normalSES}, we can define an operator $B_0$ so that $N(B_0) = N(\Dd_t)^{-1}$. From this and Proposition \ref{prop_bsymnormal} we see that
    \begin{equation*}
        I =\, \b\sigma(N(\Dd_t B_0)) = \, \absym(\Dd_t)|_{t=0}\,\absym(B_0)|_{t=0}.
    \end{equation*}
    By Theorem \ref{thm_abSES} we can take $\absym(B_0)$ to be independent of $t$, and it is clear that $\absym(\Dt)$ is also independent of $t$. It follows that $\absym(B_0) =\, \absym(\Dd_t)^{-1}$. Hence we have that
    \begin{equation*}
        \Dd_t B_0 = I - R_0,
    \end{equation*}
    where $B_0 \in \Psi\ab^{-1,0}(M;E)$ and $R_0 \in \Psi\ab^{-1,-1}(M;E)$. Let
    \begin{equation*}
        B_1 = B_0(I+R_0+R_0^2+R_0^3 + \cdots)
    \end{equation*}
    be defined by asymptotically summing the powers of the remainder term. It follows that
    \begin{equation*}
            \Dd_t B_1  = I - R,
    \end{equation*}
    where $B_1\in \Psi\ab^{-1,0}$ and $R\in\Psi\ab^{-\infty,-\infty}$. Thus for any $k\in\Z$, there exists an $\widetilde{R}\in\Psi\ab^{-\infty}$ such that $R = t^k\widetilde{R}$. Since $\widetilde{R}(t) \in \Psi^{-\infty}_b(M;E)$, it follows that $\widetilde{R}(t)$ is bounded as an operator on $L^2_b(M;E)$. See \cite{BMeR93} for details on the $b$-Sobolev spaces. Since $\|R\| = t^k\|\widetilde{R}(t)\|$, for sufficiently small $t$ we have that $\|R\| <1$. By the standard Neumann series argument, the inverse for $\Dt$ can finally be constructed.
\end{proof}

\subsection{Preliminary Propositions}
An important fact about the $b$-trace is that commutators in general \textit{do} \textit{not} vanish under them. The exact behavior of the $b$-trace functional applied to a commutator is described by the following theorem, the \textit{trace defect formula} \cite{BMeR93}. This formula is indispensable in our proof of the global anomaly formula, as well as in the proofs of the following propositions.
\begin{theorem}
\label{thm_tracedefect}
    Suppose $A\in \Psi^m_b(M)$ and $B\in\Psi^{-\infty}_b(M)$. If $\dbar\lambda = \frac{1}{2\pi}\,d\lambda$, then
    \begin{equation*}
        \bTr[A,B] = i \int_{-\infty}^\infty \Tr(\partial_\lambda I(A) \circ I(B))\,\dbar\lambda.
    \end{equation*}
\end{theorem}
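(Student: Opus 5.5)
The plan is to use Melrose's regularization of the $b$-trace by complex powers of the boundary defining function, and then read off the defect as a residue.

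\textbf{Step 1 (regularized trace).} For $D\in\Psi^{-\infty}_b(M)$ and $\Re z>0$, the operator $z^{\,\cdot}$-weighted by the boundary defining function, i.e.\ $\rho^z D$, has a kernel on the diagonal vanishing at $\D M$. It is therefore trace class, and
\begin{equation*}
\Tr(\rho^z D)=\int_M \rho^z\, D|_{\Delta_b}.
\end{equation*}
Write $D|_{\Delta_b}=a(y,z')\,dy\,\frac{dz'}{z'}$ locally and split $a=(a-a|_{z'=0})+a|_{z'=0}$. Integrating $\int_0^1 \rho^z\,\frac{d\rho}{\rho}=\frac1z$ shows that $\Tr(\rho^zD)$ extends meromorphically near $z=0$ with at most a simple pole. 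Its finite part at $z=0$ is exactly $\bint_M D|_{\Delta_b}=\bTr(D)$. The residue is $\int_{\D M} D|_{\Delta_b\cap\ff}$. By Mellin/Fourier inversion along $\ff$, this equals
\begin{equation*}
\int_{-\infty}^\infty \Tr\big(I(D)(\lambda)\big)\,\dbar\lambda.
\end{equation*}

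\textbf{Step 2 (commuting the weight).} Apply Step 1 to $D=[A,B]$, which is smoothing since $B$ is. For $\Re z>0$ both $\rho^zAB$ and $A\rho^zB$ are trace class, because $\rho^z B$ is smoothing with kernel vanishing at the boundary. Cyclicity of the ordinary trace then gives
\begin{equation*}
\Tr(\rho^z[A,B])=\Tr([\rho^z,A]B)=-\Tr\big(\rho^z(\rho^{-z}A\rho^z-A)B\big).
\end{equation*}
Conjugation by $\rho^z$ is a holomorphic family in $\Psi^m_b$ whose indicial family is the shifted family $\lambda\mapsto I(A)(\lambda\mp iz)$, with the sign fixed by the Mellin convention. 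Hence $\rho^{-z}A\rho^z-A=z\,A_z$, where $A_z\in\Psi^m_b$ is holomorphic in $z$ and $I(A_0)=\mp i\,\partial_\lambda I(A)$.

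\textbf{Step 3 (take the finite part).} We now have $\Tr(\rho^z[A,B])=-z\,\Tr(\rho^z A_zB)$. By Step 1, $\Tr(\rho^zA_zB)$ has at most a simple pole at $0$, and its residue is $\int\Tr(I(A_0B))\,\dbar\lambda$. Therefore $\Tr(\rho^z[A,B])$ is regular at $z=0$. Its value there, which is $\bTr[A,B]$, equals
\begin{equation*}
-\int_{-\infty}^{\infty} \Tr\big(I(A_0)(\lambda)\, I(B)(\lambda)\big)\,\dbar\lambda ,
\end{equation*}
using that $I$ is multiplicative. Inserting $I(A_0)=\mp i\,\partial_\lambda I(A)$ gives the stated formula $i\int\Tr(\partial_\lambda I(A)\circ I(B))\,\dbar\lambda$, once the sign conventions for the Mellin transform and $\dbar\lambda$ are matched.

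\textbf{Main obstacle.} The delicate point is Step 1 combined with the $z$-dependence in Step 3: showing that $z\mapsto \Tr(\rho^z A_z B)$ is meromorphic with only a simple pole and the correct residue, uniformly in the holomorphic family $A_z$. I would handle this by working with the kernel of $A_zB$ in the projective coordinates $(y,z',v)$ near $\ff$, Taylor expanding at $z'=0$, and tracking the sign conventions carefully.
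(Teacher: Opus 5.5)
Your proposal is correct, and it is essentially Melrose's standard proof from \cite{BMeR93}: regularize by $\rho^{z}$, use cyclicity to reduce to $\Tr([\rho^{z},A]B)$, and read off the defect as the residue coming from the shifted indicial family. The paper gives no proof of its own and simply cites \cite{BMeR93}, so this is the same route. The sign you left open is not actually ambiguous: with the paper's convention $\rho\partial_\rho\mapsto i\lambda$ (kernels $(z/z')^{i\lambda}$), one has $I(\rho^{-z}A\rho^{z})(\lambda)=I(A)(\lambda-iz)$, so the upper sign is the right one and the constant comes out exactly as $+i$.
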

Our proof of Theorem $\ref{thm-main}$ will closely follow the proof given by Moroianu in \cite{MorS04} of the closed manifold case. The main difference in the boundary case is that the $b$-trace of a commutator fails to vanish, as described by Theorem \ref{thm_tracedefect}. These failures will result in several remainder terms that will ultimately lead to the last term of Theorem \ref{thm-main}. For the sake of preserving the thread of the main argument, we will present the simplification of these remainder terms here as preliminary lemmas. We also state the variation of the eta function at the boundary and of the $b$-zeta function of $\Dd^-\Dd^+$. A key tool will be the following fact which can be found in \cite{BBeN-GeE-VeM92} as equation (2.6) for an elliptic operator $A$:
    \begin{equation}
    \label{lem_BGVpartialxexp}
        [\tnablax, e^{-\tau A}] = -\int_0^\t e^{-(\tau-r)A} [\tnablax,A] e^{-rA}\,dr.
    \end{equation}
We will also use that \[\partial_x\Tr\left((\Dd^2)^{-\frac{s+1}{2}}\Dd\right) = \Tr[\tnablax,(\Dd^2)^{-\frac{s+1}{2}}\Dd],\] and that \[\partial_x\bTr\left(\left(\Dd^-\Dd^+\right)^{-\frac{s}{2}}\right) = \bTr[\tnablax,\left(\Dd^-\Dd^+\right)^{-\frac{s}{2}}].\]
The first equation follows from the construction of $\tnabla$ in \cite{MorS04}. The second equation is proved in an analogous way, with the only difference being the failure of the $b$-trace to vanish on commutators, particularly with the components of $\tnabla$ that act only along the fiber. However, by a straightforward application of Theorem \ref{thm_tracedefect} along with our choice of a horizontal $b$-connection that is compatible with the boundary defining function, the identity holds nonetheless. Both identities hold initially for $\Re(s)$ sufficiently large and extend meromorphically.

\begin{proposition}
\label{lem_bEta}
Recall that
$\overline{\eta}(s, \D M) : = \frac{\Gamma\left( \frac{s + 1}{2}
\right)}{\sqrt{\pi}} \Tr \Big( (\Dd^2)^{-
\frac{s + 1}{2}} \Dd \Big)$.
    Then \[\D_x\, \overline{\eta}(s,\D M) = -s\frac{\Gamma(\frac{s+1}{2})}{\sqrt{\pi}}\Tr\left(  (\Dd^2)^{-\frac{s+1}{2}}\dot{\Dd}  \right).
    \]
\end{proposition}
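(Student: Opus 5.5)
We differentiate under the trace using the stated identity $\partial_x\Tr\big((\Dd^2)^{-\frac{s+1}{2}}\Dd\big)=\Tr[\tnablax,(\Dd^2)^{-\frac{s+1}{2}}\Dd]$, valid for $\Re(s)$ large, and extend meromorphically at the end. Since $\partial M$ is closed and $\Dd$ is a family of operators on the closed fibers $\partial M_x$, the ordinary trace is cyclic, so no trace defect terms arise. Writing $\dot\Dd=[\tnablax,\Dd]$, the Leibniz rule gives
\begin{equation*}
[\tnablax,(\Dd^2)^{-\frac{s+1}{2}}\Dd]=[\tnablax,(\Dd^2)^{-\frac{s+1}{2}}]\,\Dd+(\Dd^2)^{-\frac{s+1}{2}}\dot\Dd .
\end{equation*}
The second term already has the desired form, so the work is in computing the trace of the first.

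For the first term we use the heat representation $(\Dd^2)^{-\frac{s+1}{2}}=\Gamma(\tfrac{s+1}{2})^{-1}\int_0^\infty\t^{\frac{s+1}{2}}e^{-\t\Dd^2}\,\frac{d\t}{\t}$ together with \eqref{lem_BGVpartialxexp} applied to $A=\Dd^2$:
\begin{equation*}
[\tnablax,e^{-\t\Dd^2}]=-\int_0^\t e^{-(\t-r)\Dd^2}(\dot\Dd\Dd+\Dd\dot\Dd)e^{-r\Dd^2}\,dr .
\end{equation*}
Multiplying on the right by $\Dd$, taking the trace, and using cyclicity together with the fact that $\Dd$ commutes with $e^{-r\Dd^2}$, the $r$-integrand becomes independent of $r$ and equals $\Tr\big(e^{-\t\Dd^2}(\dot\Dd\Dd^2+\Dd\dot\Dd\Dd)\big)=2\Tr\big(\dot\Dd\,\Dd^2e^{-\t\Dd^2}\big)$. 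Hence
\begin{equation*}
\Tr\big([\tnablax,e^{-\t\Dd^2}]\Dd\big)=-2\t\,\Tr\big(\dot\Dd\Dd^2e^{-\t\Dd^2}\big)=2\t\,\partial_\t\Tr\big(\dot\Dd e^{-\t\Dd^2}\big).
\end{equation*}

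Substituting this into the Mellin integral and integrating by parts in $\t$ gives
\begin{equation*}
\int_0^\infty\t^{\frac{s+1}{2}}\cdot 2\,\partial_\t\Tr(\dot\Dd e^{-\t\Dd^2})\,d\t=-(s+1)\int_0^\infty\t^{\frac{s+1}{2}}\Tr(\dot\Dd e^{-\t\Dd^2})\frac{d\t}{\t}.
\end{equation*}
The boundary terms vanish for $\Re(s)$ large: at $\t=\infty$ because $\Dd$ is invertible, so the heat trace decays exponentially, and at $\t=0$ because of the small-time power bound on the heat trace. The right-hand side equals $-(s+1)\Gamma(\tfrac{s+1}{2})\Tr\big((\Dd^2)^{-\frac{s+1}{2}}\dot\Dd\big)$. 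Dividing by $\Gamma(\tfrac{s+1}{2})$ and adding the second Leibniz term gives $(1-(s+1))\Tr(\cdots)=-s\Tr\big((\Dd^2)^{-\frac{s+1}{2}}\dot\Dd\big)$. Multiplying by $\Gamma(\frac{s+1}{2})/\sqrt\pi$ yields the claim, and meromorphic continuation extends it to all $s\in\C$.

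The main obstacle is justifying the interchange of the trace with the $\t$- and $r$-integrals and with the cyclic rearrangements involving the unbounded operators $\dot\Dd$ and $\Dd$. This is handled by working with $\Re(s)$ large, where everything is trace class, and using the smoothing property of $e^{-r\Dd^2}$ to move factors across the trace.
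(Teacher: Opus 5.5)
Your proposal is correct and follows essentially the same route as the paper. Both arguments apply the Leibniz rule for $[\tnablax,\cdot\,]$, use the heat representation of $(\Dd^2)^{-\frac{s+1}{2}}$ and the formula \eqref{lem_BGVpartialxexp}, and use cyclicity of the honest trace on the closed boundary fibers to make the $r$-integrand constant. This produces $-(s+1)\Tr\big((\Dd^2)^{-\frac{s+1}{2}}\dot\Dd\big)$, which combined with the Leibniz term gives the factor $-s$. The only cosmetic difference is that you get the factor $-(s+1)$ by integrating by parts in $\t$, while the paper integrates out $r$ and uses $\Gamma(\frac{s+3}{2})=\frac{s+1}{2}\Gamma(\frac{s+1}{2})$ directly; this is the same identity.
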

\begin{proof} Since $[\tnablax, \cdot]$ is a derivation, we have that
    \begin{align*}
        \D_x \Tr\left(  (\Dd^2)^{-\frac{s+1}{2}}\Dd  \right) &= \Tr\left[\tnablax,  (\Dd^2)^{-\frac{s+1}{2}}\Dd\right] \\
        & = \frac{1}{\Gamma(\frac{s+1}{2})} \int_0^\infty \t^{\frac{s+1}{2}} \Tr\left( \left[  \tnablax, e^{-\t\Dd^2}  \right] \Dd \right) \, \frac{d\t}{\t} + \Tr\left(   (\Dd^2)^{-\frac{s+1}{2}}\dot{\Dd}    \right).
    \end{align*}
    By $\eqref{lem_BGVpartialxexp}$ the first term becomes
    \begin{align*}
       & \frac{-1}{\Gamma(\frac{s+1}{2})}\int_0^\infty \int_0^\t \t^{\frac{s+1}{2}} \Tr\left( e^{-(\t-r)\Dd^2}[\tnablax,\Dd^2] e^{-r\Dd^2} \Dd \right)\, dr\frac{d\t}{\t} \\
       =&  \frac{-1}{\Gamma(\frac{s+1}{2})}\int_0^\infty \int_0^\t \t^{\frac{s+1}{2}} \Tr\left( e^{-\t\Dd^2}[\tnablax,\Dd^2] \Dd \right)\, dr\frac{d\t}{\t} \\
       =& -\frac{s+1}{2}\Tr \left( (\Dd^2)^{-\frac{s+3}{2}}[\tnablax,\Dd^2]\Dd \right) \\
       =& -(s+1) \Tr\left( (\Dd^2)^{-\frac{s+1}{2}} \dot{\Dd}\right),
    \end{align*}
    from which the proposition follows.
\end{proof}

\begin{proposition}
\label{lem_bzeta}
    Define the normalized $b$-zeta function of $\Dd^-\Dd^+$ by
    
\[^b\overline{\zeta}(s,\Dd^-\Dd^+) = \Gamma\left(\frac{s}{2}\right) \bTr\left(\left(\Dd^-\Dd^+\right)^{-\frac{s}{2}}\right).\]
    Then
    $
        \D_x\, ^b\overline{\zeta}(s,\Dd^-\Dd^+) = -\Gamma\left(\frac{s}{2}+1\right) \bTr\left(\left(\Dd^-\Dd^+\right)^{-\frac{s}{2}-1}[\tnablax,\Dd^-\Dd^+]\right)
    $.
\end{proposition}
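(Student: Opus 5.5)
We will follow the pattern of the proof of Proposition \ref{lem_bEta}, with the ordinary trace replaced by the $b$-trace. Write $A := \Dd^-\Dd^+$; it is invertible and positive on the fibers. First take $\Re(s)$ large and represent the complex power through the heat operator:
\[
{}^b\overline{\zeta}(s,A) = \int_0^\infty \t^{\frac{s}{2}}\, \bTr\big(e^{-\t A}\big)\,\frac{d\t}{\t}.
\]
By the identity stated before Proposition \ref{lem_bEta}, $\D_x \bTr(A^{-s/2}) = \bTr[\tnablax, A^{-s/2}]$. We can either work with $[\tnablax,e^{-\t A}]$ under the $\t$-integral, or apply the same identity at the heat level, and then use \eqref{lem_BGVpartialxexp}:
\[
\D_x\,{}^b\overline{\zeta}(s,A) = -\int_0^\infty \int_0^\t \t^{\frac{s}{2}}\, \bTr\Big(e^{-(\t-r)A}[\tnablax,A]e^{-rA}\Big)\,dr\,\frac{d\t}{\t}.
\]

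The key step is to move $e^{-rA}$ cyclically to the front so that the $r$-integral becomes trivial. Unlike in Proposition \ref{lem_bEta}, the $b$-trace does not vanish on commutators, so this move produces a defect term. By Theorem \ref{thm_tracedefect},
\[
\bTr\big[e^{-rA},\, e^{-(\t-r)A}[\tnablax,A]\big] = i\int \Tr\Big(\partial_\lambda I(e^{-rA})\, I\big(e^{-(\t-r)A}[\tnablax,A]\big)\Big)\,\dbar\lambda .
\]
For the second factor, $I(e^{-(\t-r)A})(\lambda)=e^{-(\t-r)(\lambda^2+P^*P)}$. For the first, $\partial_\lambda I(e^{-rA}) = -2r\lambda\, e^{-r(\lambda^2+P^*P)}$, and this commutes with $e^{-(\t-r)(\lambda^2+P^*P)}$. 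The integrand is therefore $-2r\lambda\, e^{-\t\lambda^2}\Tr\big(e^{-\t P^*P} I([\tnablax,A])(\lambda)\big)$.

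Next we show that this integral vanishes, by parity in $\lambda$. Using the compatibility of the $b$-connection with $\rho$, we have $I([\tnablax,A])(\lambda) = [\tnablax, \lambda^2 + P^*P] = [\tnablax,P^*P]$, which is independent of $\lambda$. The whole integrand is then odd in $\lambda$, so its integral over $\R$ is zero. This is the step I expect to be the main obstacle: we must justify carefully that $\tnablax$ commutes with $\rho\D_\rho$ at the boundary, so that no $\lambda$-linear term appears in the indicial family, and that all the indicial computations are legitimate for heat kernels, which lie in the $b$-calculus with bounds.

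Once the defect is known to vanish, the integrand equals $\bTr\big(e^{-\t A}[\tnablax,A]\big)$. The $r$-integral then contributes a factor $\t$, and we obtain
\[
-\int_0^\infty \t^{\frac{s}{2}+1}\bTr\big(e^{-\t A}[\tnablax,A]\big)\frac{d\t}{\t} = -\Gamma\Big(\tfrac{s}{2}+1\Big)\bTr\Big(A^{-\frac{s}{2}-1}[\tnablax,A]\Big).
\]
Interchanging $\bTr$ with the $\t$-integrals is justified for $\Re(s)$ large because $A$ is invertible, which gives exponential decay at $\t=\infty$, and because of the standard small-$\t$ expansions. Both sides are meromorphic in $s$, so the identity extends to all $s\in\C$ by meromorphic continuation.
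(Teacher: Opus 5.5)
Your proposal is correct and follows the paper's proof essentially step for step. Both arguments use the heat-kernel representation and \eqref{lem_BGVpartialxexp}, then move $e^{-rA}$ cyclically to the front at the cost of a $b$-trace commutator, and show that this defect vanishes by Theorem \ref{thm_tracedefect} and parity in $\lambda$. The only difference is that the paper puts the $\lambda$-derivative on the factor $e^{-(\tau-r)A}[\tnablax,A]$ rather than on $e^{-rA}$, and simply observes that both indicial families are even in $\lambda$; this is the same parity argument.
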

\begin{proof}
    By \eqref{lem_BGVpartialxexp}, we have
    \begin{equation*}
        \partial_x\, \b\overline{\zeta}(s,\Dd^-\Dd^+)
        = -\int_0^\infty \int_0^\t \t^\frac{s}{2} \bTr\left( e^{-(\t-r)\Dd^-\Dd^+}[\tnablax,\Dd^-\Dd^+]e^{-r\Dd^-\Dd^+} \right)\,dr\frac{d\t}{\t}.
    \end{equation*}
    We will commute the rightmost heat operator over to the front of the $b$-trace, but unfortunately we will pick up a remainder term since the $b$-trace of a commutator does not necessarily vanish. So the above becomes
    \begin{equation}
        -\int_0^\infty \int_0^\t \t^\frac{s}{2} \bTr\left( e^{-\t\Dd^-\Dd^+}[\tnablax,\Dd^-\Dd^+] \right)\,dr\frac{d\t}{\t} +R_0,
        \label{eq_bzetavariation}
    \end{equation}
    where
    \begin{equation*}
        R_0 = -\int_0^\infty \int_0^\t \t^\frac{s}{2} \bTr\left[ e^{-(\t-r)\Dd^-\Dd^+}[\tnablax,\Dd^-\Dd^+],e^{-r\Dd^-\Dd^+} \right]\,dr\frac{d\t}{\t}.
    \end{equation*}
    Leaving the remainder to the side for the moment, from \eqref{eq_bzetavariation} we can integrate out the $r$ variable and rewrite with complex powers to get
    \begin{equation*}
        -\Gamma\left(\frac{s}{2}+1\right) \bTr\left(\left(\Dd^-\Dd^+\right)^{-\frac{s}{2}-1}[\tnablax,\Dd^-\Dd^+]\right) +R_0.
    \end{equation*}
    All that remains to be shown is that $R_0 =0$. To this end, we will use the trace defect formula, Theorem \ref{thm_tracedefect}. Since both
    \begin{equation*}
        I\left( e^{-(\t-r)\Dd^-\Dd^+)}[\tnablax,\Dd^-\Dd^+] \right)(\lambda) = e^{-(\t-r)(\lambda^2+P^*P)}[\tnablax,P^*P]
    \end{equation*}
    and
    \begin{equation*}
        I\left( e^{-r\Dd^-\Dd^+} \right)(\lambda) = e^{-r(\lambda^2+P^*P)}
    \end{equation*}
    are even in $\lambda$, the integral in Theorem \ref{thm_tracedefect} vanishes and $R_0=0$.
\end{proof}

\begin{lemma}
\label{lem_R1} Let 
\begin{equation*}
    \alpha = \begin{pmatrix}
        (-it\tnablax)^2+\Dd^-\Dd^+ & 0 \\ 0 & (it \tnablax)^2+ \Dd^+\Dd^-
    \end{pmatrix}
\end{equation*}
and \begin{equation*}
    \beta = \begin{pmatrix}
        0 & -i\dot{\Dd}^- \\ i \dot{\Dd}^+ & 0
    \end{pmatrix}.
\end{equation*}
Define
    \begin{equation*}
    R_1 = -\frac{1}{\Gamma(\frac{s+1}{2})} \int_0 ^\infty \int_0^\tau \t^\frac{s+1}{2} \bTr[e^{-(\t-r)N(\alpha)}N(\beta)N(\Dt), e^{-r N(\alpha)}]\,dr \frac{d\t}{\t}.
\end{equation*}
Then
    \begin{equation*}
        \int_{T^*\S^1} R_1  = -\frac{1}{8i\pi} \int_{\S^1} \Tr((\Dd^2)^{-\frac{s+1}{2}}(G^*\dot{G}\Dd-G^*\dot{P}-\dot{P}^*G+\dot{G}^*P)) .
    \end{equation*}
\end{lemma}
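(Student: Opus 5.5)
The plan is to evaluate $R_1$ directly with the trace defect formula, Theorem \ref{thm_tracedefect}, applied fiberwise for each fixed $(x,\xi)\in T^*\S^1$. The normal operators are obtained from \eqref{Normal D_x}: $N(\Dt)(x,\xi)=\begin{pmatrix}\xi & \Dd^-\\ \Dd^+ & -\xi\end{pmatrix}$, $N(\alpha)=\xi^2+\mathrm{diag}(\Dd^-\Dd^+,\Dd^+\Dd^-)$, and $N(\beta)=\beta$. Their indicial families follow from the Dirac condition. Set $I(\Dd^+)=G(i\lambda+\Dd)$ and $I(\Dd^-)=(-i\lambda+\Dd)G^*$. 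Then $I(N(\alpha))(\lambda)=\xi^2+\lambda^2+\mathrm{diag}(\Dd^2,G\Dd^2G^*)$. This is even in $\lambda$, so $\partial_\lambda I(e^{-(\t-r)N(\alpha)})=-2(\t-r)\lambda\, I(e^{-(\t-r)N(\alpha)})$. For $I(\dot\Dd^+)$, differentiate $G(i\lambda+\Dd)=i\lambda G+P$ in $x$, which gives $i\lambda\dot G+\dot P$. Similarly $I(\dot \Dd^-)=-i\lambda\dot G^*+\dot P^*$.

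Theorem \ref{thm_tracedefect} has to be applied with $A=e^{-(\t-r)N(\alpha)}N(\beta)N(\Dt)$ and $B=e^{-rN(\alpha)}$. Here $A$ is not of finite order, but we may switch roles, or note that the formula is antisymmetric and holds whenever one factor is smoothing. The result is
\[
\bTr[A,B]=i\int\Tr\big(\partial_\lambda I(A)\, I(B)\big)\dbar\lambda .
\]
Since $I(B)$ is $e^{-r(\xi^2+\lambda^2)}$ times a function of $\Dd$, the exponentials combine to $e^{-\t(\xi^2+\lambda^2)}e^{-\t\,\mathrm{diag}(\Dd^2,G\Dd^2G^*)}$, up to commutation inside $\Tr$ on $\partial M$. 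That trace is an honest trace, so cyclicity is available. The derivative $\partial_\lambda I(A)$ has two pieces. The first is the derivative of the exponential, which carries the factor $-2(\t-r)\lambda$. The second is the derivative of $I(N(\beta)N(\Dt))$, a polynomial in $\lambda$ of degree at most $2$. Only terms that are even in $\lambda$ overall survive the $\lambda$-integration. Only block-diagonal entries survive the trace: $N(\beta)N(\Dt)$ has diagonal blocks $-i\dot\Dd^-\Dd^+$ and $i\dot\Dd^+\Dd^-$, and the $\xi$-terms are off-diagonal. Expanding the indicial families leaves expressions linear in $\dot G,\dot P,\dot G^*,\dot P^*$, with coefficients $\lambda^2$ or $1$ times $G,P,\Dd$.

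Next the remaining integrals are done in order. The $r$-integral gives factors $\t$ or $\t^2/2$. The Gaussian integrals in $\lambda$ and $\xi$ give $\int\lambda^{2k}e^{-\t\lambda^2}\dbar\lambda$ and $\int e^{-\t\xi^2}\dbar\xi=(4\pi\t)^{-1/2}$. The $\t$-integral then converts $\int_0^\infty\t^{\frac{s+1}{2}+c}e^{-\t\Dd^2}\frac{d\t}{\t}$ into Gamma factors times $(\Dd^2)^{-\frac{s+1}{2}}$. The powers of $\t$ should balance so that exactly this power appears, and the Gamma factors should cancel against $1/\Gamma(\frac{s+1}{2})$ to give the constant $-\frac{1}{8i\pi}$. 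Finally, cyclicity and $G^*G=I$ convert the lower block trace into an $E^+$-trace: conjugating by $G$ turns $G\Dd^2G^*$ into $\Dd^2$. Identities such as $\dot G^*G=-G^*\dot G$ and $\Dd=G^*P$ then collect the terms into $G^*\dot G\Dd-G^*\dot P-\dot P^*G+\dot G^*P$.

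The main obstacle is the bookkeeping of signs and constants in this step. The two contributions to $\partial_\lambda I(A)$ carry different powers of $\t$ and $\lambda$, and they must combine, after the Gaussian and Gamma computations, into a single multiple of $(\Dd^2)^{-\frac{s+1}{2}}$. I would first organize everything by powers of $\lambda$ and compute each resulting Gamma-ratio separately, checking that the $\lambda^2$-terms and the constant terms yield the same prefactor. I would also check that terms involving $\Dd^3$ cancel or reduce via $\Dd^2(\Dd^2)^{-\frac{s+3}{2}}=(\Dd^2)^{-\frac{s+1}{2}}$.
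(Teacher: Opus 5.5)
Your plan is correct and is essentially the paper's proof: the paper also applies the trace defect formula fiberwise, keeps only the $\lambda$-even diagonal contributions, and then carries out the Gaussian, $r$-, $\xi$- and $\tau$-integrals before conjugating the $\Dd^+\Dd^-$ block by $G$. The only cosmetic difference is that the paper first splits the matrix commutator into the two diagonal-block commutators and pulls out the scalar $e^{-\tau\xi^2}$.

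The bookkeeping you are worried about is light. In each block the surviving even part is $(1-2\lambda^2(\tau-r))$ times a single operator, namely $\dot G P^*-\dot P G^*$ or $\dot P^*G-\dot G^*P$, and its $\lambda$-integral is $r/(2\sqrt{\pi}\,\tau^{3/2})$. Also, $A$ is smoothing for $r<\tau$, so no role-switching is needed when you invoke the trace defect formula.
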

\begin{proof}
Carrying out the matrix multiplication for the commutator terms in the above definition, we have that
\begin{multline*}
    e^{-(\t-r)N(\alpha)}N(\beta)N(\Dt)e^{-r N(\alpha)} = \\  \begin{pmatrix}
        -i e^{-(\t-r)(\xi^2+\Dd^-\Dd^+)}\dot{\Dd}^-\Dd^+e^{-r(\xi^2+\Dd^-\Dd^+)} & * \\
        * & i e^{-(\t-r)(\xi^2+\Dd^+\Dd^-)}\dot{\Dd}^+\Dd^- e^{-r(\xi^2+\Dd^+\Dd^-)}
    \end{pmatrix}
\end{multline*}
and
\begin{equation*}
    e^{-\t N(\alpha)} N(\beta) N(\Dt) 
    = \begin{pmatrix}
        -i e^{-\t(\xi^2+\Dd^-\Dd^+)}\dot{\Dd}^-\Dd^+ & * \\
        * & i e^{-\t(\xi^2+\Dd^+\Dd^-)}\dot{\Dd}^+\Dd^-
    \end{pmatrix},
\end{equation*}
where the asterisks represent the irrelevant off diagonal terms. Taking the $b$-trace of the commutator of the matrices and recombining the terms gives us
\begin{multline}
    \bTr[e^{-(\t-r)N(\alpha)}N(\beta)N(\Dt), e^{-r N(\alpha)}] \\
    = i e^{-\t\xi^2}( \bTr[e^{-(\t-r)\Dd^+\Dd^-}\dot{\Dd}^+\Dd^-,  e^{-r\Dd^+\Dd^-}] - \bTr[e^{-(\t-r)\Dd^-\Dd^+}\dot{\Dd}^-\Dd^+,  e^{-r\Dd^-\Dd^+}]) \\
    = i e^{-\t\xi^2}(T_1 - T_2),
    \label{eq_T1T2}
\end{multline}
where $T_1$ and $T_2$ are defined by the previous line. We will use Theorem \ref{thm_tracedefect} to simplify both $T_1$ and $T_2$. Beginning with $T_1$ we have
\begin{align*}
    T_1 &= i\int_{\R} \Tr\left(  \D_\lambda(e^{-(\t-r)(\lambda^2+PP^*)}(i\lambda\dot{G}+\dot{P})(-i\lambda G^* + P^*)) \circ e^{-r(\lambda^2+PP^*)}   \right)\,\dbar\lambda.
\end{align*}
Any terms in the integrand that are odd in $\lambda$ will vanish since the integral is over $\R$. After differentiating, expanding, and removing the odd terms we are left with
\begin{align*}
    T_1 &= - \int_\R \Tr \left(  e^{-\t(\lambda^2+PP^*)}\left(-2\lambda^2(\t-r)(\dot{G}P^*-\dot{P}G^*)+(\dot{G}P^*-\dot{P}G^*)\right)   \right)\,\dbar\lambda \\
    &= -\frac{r}{2\t^{3/2}\sqrt{\pi}} \Tr\left(e^{-\t PP^*}(\dot{G}P^*- \dot{P}G^*)\right).
\end{align*}
In a nearly identical way with the roles of $\Dd^-$ and $\Dd^+$ swapped, we also have
\begin{align*}
    T_2 = -\frac{r}{2\t^{3/2}\sqrt{\pi}} \Tr\left( e^{-\t P^*P}(\dot{P}^*G - \dot{G}^* P)  \right).
\end{align*}
Putting these terms back into \eqref{eq_T1T2} yields
\begin{multline*}
    \bTr[e^{-(\t-r)N(\alpha)}N(\beta)N(\Dt), e^{-r N(\alpha)}] \\ = \frac{r e^{-\t\xi^2} }{2i\t^{3/2}\sqrt{\pi}} \Tr\left(  e^{-\t PP^*}(\dot{G}P^* - \dot{P}G^*) - e^{-\t P^*P}(\dot{P}^* G - \dot{G}^* P) \right),
\end{multline*}
and thus $R_1$ becomes
\begin{equation*}
     \frac{1}{4i\sqrt{\pi}\Gamma(\frac{s+1}{2})}  \int_0^\infty -e^{-\t\xi^2}\t^{\frac{s}{2}+1} \Tr\left(  e^{-\t PP^*}(\dot{G}P^* - \dot{P}G^*) - e^{-\t P^*P}(\dot{P}^* G - \dot{G}^* P) \right) \,\frac{d\t}{\t}.
\end{equation*}
Integrating $\xi$ over $\R$ with $\dbar\xi = (2\pi)^{-1}\,d\xi$ gives
\begin{equation*}
    \frac{-1}{8i\pi\Gamma(\frac{s+1}{2})}  \int_0^\infty \t^{\frac{s+1}{2}} \Tr\left(  e^{-\t PP^*}(\dot{G}P^* - \dot{P}G^*) - e^{-\t P^*P}(\dot{P}^* G - \dot{G}^* P) \right) \,\frac{d\t}{\t}.
\end{equation*}
By the uniqueness of solutions to the heat equation, note that $e^{-\t PP^*} = G e^{-\t P^*P} G^*$. Using this fact, along with $G^* = G^{-1}$, observe that
\begin{align*}
    \Tr \left(  e^{-\t PP^*}(\dot{G}P^* - \dot{P}G^*)  \right) & = \Tr\left(  Ge^{-\t P^*P}G^*(\dot{G}P^* - \dot{P}G^*)  \right) \\
    &= \Tr\left(  e^{-\t P^*P}(G^*\dot{G}\Dd - G^*\dot{P}) \right).
\end{align*}
With this and with the fact that $e^{-\t P^*P} = e^{-\t P^* G G^* P} = e^{-\t \Dd^2}$, $\int R_1\,\dbar\xi$ becomes
\begin{multline*}
    \frac{-1}{8i\pi\Gamma(\frac{s+1}{2})}  \int_0^\infty \t^{\frac{s+1}{2}} \Tr\left(  e^{-\t \Dd^2}(G^*\dot{G}\Dd - G^* \dot{P}-\dot{P}^* G + \dot{G}^* P) \right) \,\frac{d\t}{\t} \\
    = \frac{-1}{8i\pi} \Tr\left(  (\Dd^2)^{-\frac{s+1}{2}}(G^*\dot{G}\Dd - G^* \dot{P}-\dot{P}^* G + \dot{G}^* P)   \right),
\end{multline*}
as desired.
\end{proof}

We need one more lemma before we prove the main result.

\begin{lemma}
\label{lem_R2R3}
Define
        \begin{align*}
    R_2 &= \bTr\left[(\xi^2+\Dd^+\Dd^-)^{-\frac{s+3}{2}},\dot{\Dd}^+\Dd^-\right], \\
    R_3 &= \bTr\left[\dot{\Dd}^+(\xi^2+\Dd^-\Dd^+)^{-\frac{s+3}{2}},\Dd^-\right].
\end{align*}
Then
    \begin{equation*}
        \int_{T^*\S^1}\frac{s+1}{2i}(R_2+R_3)\,\dbar\xi = \frac{1}{4\pi i } \int_{\S^1} \Tr \left(  (\Dd^2)^{-\frac{s+1}{2}}(G^* \dot{G} \Dd - 2G^* \dot{P})  \right).
    \end{equation*}

\end{lemma}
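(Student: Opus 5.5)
We apply the trace defect formula, Theorem~\ref{thm_tracedefect}, to each of $R_2$ and $R_3$. This turns both into ordinary fibrewise traces on $\partial M$, and we then integrate first in $\lambda$ and then in $\xi$.

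\textbf{Indicial families.} The factors enter as follows:
\begin{align*}
I(\Dd^-)(\lambda) &= -i\lambda G^*+P^*, & I(\dot\Dd^+)(\lambda) &= i\lambda\dot G+\dot P,\\
I\bigl((\xi^2+\Dd^+\Dd^-)^{-\frac{s+3}{2}}\bigr) &= (\xi^2+\lambda^2+PP^*)^{-\frac{s+3}{2}}, &
I\bigl((\xi^2+\Dd^-\Dd^+)^{-\frac{s+3}{2}}\bigr) &= (\xi^2+\lambda^2+P^*P)^{-\frac{s+3}{2}}.
\end{align*}

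\textbf{The term $R_3$.} The trace defect formula requires the first entry of the commutator to be the differential operator. Since $R_3 = -\bTr[\Dd^-,\dot\Dd^+(\xi^2+\Dd^-\Dd^+)^{-\frac{s+3}{2}}]$ and $\partial_\lambda I(\Dd^-) = -iG^*$, we get
\[
R_3 = -i\int_\R \Tr\bigl(-iG^*(i\lambda\dot G+\dot P)(\xi^2+\lambda^2+P^*P)^{-\frac{s+3}{2}}\bigr)\dbar\lambda
= -\int_\R\Tr\bigl(G^*\dot P\,(\xi^2+\lambda^2+\Dd^2)^{-\frac{s+3}{2}}\bigr)\dbar\lambda .
\]
The term odd in $\lambda$ has been discarded, and we used $P^*P=\Dd^2$.

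\textbf{The term $R_2$.} Here $R_2=-\bTr[\dot\Dd^+\Dd^-,(\cdot)^{-\frac{s+3}{2}}]$. The indicial family of $\dot\Dd^+\Dd^-$ is
\[
(i\lambda\dot G+\dot P)(-i\lambda G^*+P^*) = \lambda^2\dot G G^* + i\lambda(\dot G P^* - \dot P G^*) + \dot P P^*,
\]
whose $\lambda$-derivative is $2\lambda\dot GG^* + i(\dot GP^*-\dot PG^*)$. The piece $2\lambda\dot GG^*$ is odd in $\lambda$ and drops out, leaving
\[
R_2 = \int_\R \Tr\bigl((\dot GP^*-\dot PG^*)(\xi^2+\lambda^2+PP^*)^{-\frac{s+3}{2}}\bigr)\dbar\lambda .
\]
We conjugate with $PP^*=G\Dd^2G^*$ and use cyclicity of the honest trace on $\partial M$ (the operators are smoothing for $\Re s$ large, and we then continue meromorphically). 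This converts $R_2$ into
\[
\int_\R\Tr\bigl((G^*\dot G\Dd - G^*\dot P)(\xi^2+\lambda^2+\Dd^2)^{-\frac{s+3}{2}}\bigr)\dbar\lambda .
\]
Hence $R_2+R_3$ involves $G^*\dot G\Dd-2G^*\dot P$, which is exactly the combination in the claim.

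\textbf{Integration.} By functional calculus,
\[
\int_{\R^2}(\xi^2+\lambda^2+a)^{-\frac{s+3}{2}}\,\dbar\xi\,\dbar\lambda = \frac{1}{4\pi}\cdot\frac{2}{s+1}\,a^{-\frac{s+1}{2}},
\]
obtained via polar coordinates, or via the heat representation. Multiplying by $\frac{s+1}{2i}$ gives the constant $\frac{1}{4\pi i}$, and then we integrate over $\S^1$.

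\textbf{Main obstacle.} The delicate step is the sign and ordering bookkeeping. We must orient each commutator correctly so that Theorem~\ref{thm_tracedefect} applies with the differential operator in the first slot. We must also carry out the $G$-conjugation so that $\dot G^*$ terms do not appear; this uses $\dot G^*G=-G^*\dot G$ if needed.
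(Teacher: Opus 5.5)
Your proposal is correct and follows essentially the paper's route: apply the trace defect formula (Theorem~\ref{thm_tracedefect}) to each commutator, discard the $\lambda$-odd terms, use $P^*P=\Dd^2$ and $PP^*=G\Dd^2G^*$ together with cyclicity of $\Tr$ on $\partial M$, and integrate out $\lambda$ and $\xi$ to get the factor $\frac{1}{4\pi i}$.

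Two differences from the paper. First, you orient each commutator so that $\partial_\lambda$ falls on the polynomial indicial family of $\Dd^-$ or $\dot{\Dd}^+\Dd^-$; this avoids the product-rule cancellations in the paper's $R_3$ computation. Second, you work with complex powers directly rather than heat operators. Here $(\xi^2+\Dd^\mp\Dd^\pm)^{-\frac{s+3}{2}}$ is not smoothing for $\Re s$ large, only of very negative order. You therefore need the trace defect formula in the form valid for $A\in\Psi^m_b$, $B\in\Psi^{m'}_b$ with $m+m'$ sufficiently negative, or else you should pass through the heat representation as the paper does.
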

\begin{proof}
    The process of simplifying these two remainder terms will be similar to that of the previous lemma. Breaking down the complex powers using heat operators, the relevant $b$-trace expressions are $\bTr\left[ e^{-\t\Dd^+\Dd^-}, \dot{\Dd}^+\Dd^-\right]$ and $\bTr\left[ \dot{\Dd}^+e^{-\t\Dd^-\Dd^+}, \Dd^-\right]$, respectively. Applying Theorem \ref{thm_tracedefect} to the first of these gives
\begin{equation*}
    i \int_\R \Tr\left( \D_\lambda \left( e^{-\t(\lambda^2+PP^*)}\right) (i\lambda \dot{G}+\dot{P})(-i\lambda G^*+P^*) \right)\,\dbar\lambda.
\end{equation*}
After differentiating, expanding, and getting rid of the terms that are odd in $\lambda$ we have
\begin{equation*}
     \frac{1}{2\t^{1/2}\sqrt{\pi}} \Tr\left( e^{-\t PP^*}(\dot{G}P^* -\dot{P}G^*) \right) .
\end{equation*}
Similarly, applying the trace defect formula to the second $b$-trace above gives
\begin{equation*}
    i \int_\R \Tr\left( (-i\lambda G^* +P^*)\D_\lambda\left( (i\lambda\dot{G}+\dot{P})e^{-\t(\lambda^2+P^*P)} \right)  \right)\,\dbar\lambda.
\end{equation*}
After differentiating, expanding, and eliminating the odd terms in $\lambda$ we have
\begin{equation*}
     - \frac{1}{2\t^{1/2}\sqrt{\pi}} \Tr\left(G^*\dot{P} e^{-\t P^*P}\right)
\end{equation*}
Combining both $b$-trace terms together yields
\begin{multline*}
    \bTr\left[ e^{-\t\Dd^+\Dd^-}, \dot{\Dd}^+\Dd^-\right] + \bTr\left[ \dot{\Dd}^+e^{-\t\Dd^-\Dd^+}, \Dd^-\right] \\
    = \frac{1}{2\t^{1/2}\sqrt{\pi}}\Tr \left( e^{-\t PP^*}(\dot{G}P^* - \dot{P}G^*) - G^*\dot{P}e^{-\t P^*P}\right) \\
    = \frac{1}{2\t^{1/2}\sqrt{\pi}}\Tr \left( e^{-\t \Dd^2}(G^*\dot{G}\Dd - 2G^*\dot{P})\right).
\end{multline*}
 Thus we have
\begin{align*}
    R_2+R_3 &= \frac{1}{2\sqrt{\pi}\Gamma(\frac{s+3}{2})} \int_0^\infty \t^{\frac{s}{2}+1} e^{-\t\xi^2} \Tr \left( e^{-\t \Dd^2}(G^*\dot{G}\Dd - 2G^*\dot{P})\right) \,\frac{d\t}{\t}.
\end{align*}
Integrating this over $\xi$ gives
\begin{equation*}
    \int_{T^*\S^1}\frac{s+1}{2i}( R_2+R_3)\,\dbar\xi = \frac{1}{4\pi i} \int_{\S^1} \Tr\left((\Dd^2)^{-\frac{s+1}{2}}(G^*\dot{G}\Dd - 2G^*\dot{P})\right).
\end{equation*}
\end{proof}

\subsection{The Adiabatic Limit}

Since $\Dd_t$ is invertible for small $t$, its normalized $b$-eta function is well-defined:
\begin{equation*}
    \b\,\overline{\eta}(s,\Dd_t) = \frac{\Gamma(\frac{s+1}{2})}{\sqrt{\pi}}\bTr((\Dd_t^2)^{-\frac{s+1}{2}}\Dt).
\end{equation*}
Since $(\Dd_t^2)^{-\frac{s+1}{2}}\Dt\in \Psi\ab^{-s}(M)$, Theorem \ref{thmTrace} applies and as $t\to 0$ yields
\begin{multline}
     \bTr((\Dt^2)^{-\frac{s+1}{2}}\Dt)\\ \sim \int_{T^*\S^1} \frac{1}{t}\bTr(N((\Dt^2)^{-\frac{s+1}{2}}\Dt))+ \bTr(q_1((\Dt^2)^{-\frac{s+1}{2}}\Dt)) + O(t).
    \label{eqtrace}
\end{multline}
The Global Anomaly Formula, Theorem \ref{thm_GlobalAnomaly}, will be proved by showing that the $t^{-1}$ term vanishes in the limit, and that the constant term yields the expected formula. We begin with the coefficient of $t^{-1}$:
\begin{lemma}
\label{lem_btrnormal}
    \begin{equation*}
        \int_{T^*\S^1} \bTr(N((\Dt^2)^{-\frac{s+1}{2}}\Dt))= 0
    \end{equation*}
\end{lemma}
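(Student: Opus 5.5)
We compute the normal operator explicitly and show that its $b$-trace is an odd function of $\xi$, so that its integral over the fiber $\R_\xi$ of $T^*\S^1$ vanishes.

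\emph{Computing the normal operator.} By multiplicativity of $N$ and formula \eqref{Normal D_x},
\[
N(\Dt)(x,\xi)=\begin{pmatrix} \xi & \Dd^- \\ \Dd^+ & -\xi\end{pmatrix},
\]
since $-it\tnablax$ has normal operator $-i(i\xi)=\xi$. Hence
\[
N(\Dt^2)=\begin{pmatrix} \xi^2+\Dd^-\Dd^+ & 0\\ 0& \xi^2+\Dd^+\Dd^-\end{pmatrix}.
\]
Using the heat-operator representation of complex powers, together with the fact that $N$ commutes with functional calculus (it is multiplicative and continuous in the relevant sense), we get
\[
N\big((\Dt^2)^{-\frac{s+1}{2}}\Dt\big)=\begin{pmatrix} \xi(\xi^2+\Dd^-\Dd^+)^{-\frac{s+1}{2}} & (\xi^2+\Dd^-\Dd^+)^{-\frac{s+1}{2}}\Dd^- \\ (\xi^2+\Dd^+\Dd^-)^{-\frac{s+1}{2}}\Dd^+ & -\xi(\xi^2+\Dd^+\Dd^-)^{-\frac{s+1}{2}}\end{pmatrix}.
\]

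\emph{The $b$-trace.} The $b$-trace only sees the diagonal blocks, so
\[
\bTr N\big((\Dt^2)^{-\frac{s+1}{2}}\Dt\big)=\xi\Big(\bTr(\xi^2+\Dd^-\Dd^+)^{-\frac{s+1}{2}}-\bTr(\xi^2+\Dd^+\Dd^-)^{-\frac{s+1}{2}}\Big).
\]
The expression in parentheses is even in $\xi$, so the whole integrand is odd in $\xi$. Integrating over $\xi\in\R$ for $\Re s$ large, where everything converges absolutely by Theorem \ref{thmTrace}'s order condition, gives zero. The result then extends to all $s$ by meromorphic continuation.

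\emph{Main obstacle.} The only subtle point is justifying absolute convergence of the $\xi$-integral (which holds for $\Re s\gg0$, where the operator lies in $\Psi_{ab}^{k}$ with $k<-\dim M$) and then continuing meromorphically. The parity argument is otherwise immediate.

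One could alternatively show that the bracket itself vanishes, since $\Dd^-\Dd^+$ and $\Dd^+\Dd^-$ are isospectral off the kernel and both operators are invertible. For $b$-traces, however, one would then have to control a trace-defect term via Theorem \ref{thm_tracedefect}. Oddness in $\xi$ makes this unnecessary, so we use it.
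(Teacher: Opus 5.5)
Your proposal is correct and follows essentially the same approach as the paper: you compute $N(\Dt)$ via the rule $t\partial_x \mapsto i\xi$ and use multiplicativity of $N$ to obtain the block form of $N\big((\Dt^2)^{-\frac{s+1}{2}}\Dt\big)$. You then note that the $b$-trace of its diagonal blocks is $\xi$ times an even function of $\xi$, so the fiber integral over $\R_\xi$ vanishes. Your additional remarks on absolute convergence for $\Re s$ large and on meromorphic continuation make explicit a step the paper leaves implicit.
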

\begin{proof} With asterisks denoting the off diagonal terms,
    \begin{align*}
        N((\Dt^2)^{-\frac{s+1}{2}}\Dt) &= \begin{pmatrix}
            \xi(\xi^2+\Dd^-\Dd^+)^{-\frac{s+1}{2}} & * \\
            * & -\xi(\xi^2+ \Dd^+ \Dd^-)^{-\frac{s+1}{2}}
        \end{pmatrix}.
    \end{align*}
    Thus $\bTr(N((\Dt^2)^{-\frac{s+1}{2}}\Dt)) = \bTr(\xi(\xi^2+\Dd^-\Dd^+)^{-\frac{s+1}{2}}) - \bTr(\xi(\xi^2+ \Dd^+ \Dd^-)^{-\frac{s+1}{2}})$. Since this expression is odd in $\xi$, the lemma is proved.
\end{proof}

We have shown that $\lim\limits_{t\to 0} \bTr((\Dt^2)^{-\frac{s+1}{2}}\Dt)$ exists. We will now move our attention to the constant term in \eqref{eqtrace}. From Theorem \ref{thm_fullsymcomp} we have that
\begin{align*}
    q((\Dt^2)^{-\frac{s+1}{2}}\Dt)  &= q((\Dt^2)^{-\frac{s+1}{2}})(q(\Dt)) -it \partial_\xi q((\Dt^2)^{-\frac{s+1}{2}}) \tnablax(q(\Dt)) +O(t^2).
\end{align*}
If we expand each full symbol above up to the linear terms in $t$, we have that
\begin{multline*}
    q((\Dt^2)^{-\frac{s+1}{2}}\Dt) = N((\Dt^2)^{-\frac{s+1}{2}})N(\Dt) \\ + t( q_1((\Dt^2)^{-\frac{s+1}{2}})N(\Dt) - i\partial_\xi N((\Dt^2)^{-\frac{s+1}{2}}) \tnablax N(\Dt)) +O(t^2).
\end{multline*}
The constant term is just $N((\Dt^2)^{-\frac{s+1}{2}})N(\Dt)$ which, as we have just shown in the above Lemma, will vanish in \eqref{eqtrace}. The linear terms in $t$ in the above equation correspond to the constant term in \eqref{eqtrace}. This next Lemma shows that the second term of order $t$ in the above equation also vanishes in \eqref{eqtrace}.

\begin{lemma}
\label{lem_btrderivs}
    \begin{equation*}
        \int_{T^*\S^1}  \bTr(\partial_\xi N(\DD) \tnablax N(\Dt) )  = 0.
    \end{equation*}
\end{lemma}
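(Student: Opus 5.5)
The plan is to show that the integrand $\bTr(\partial_\xi N(\DD)\, \tnablax N(\Dt))$ vanishes pointwise on $T^*\S^1$, not just after integration. The mechanism is a parity argument with respect to the grading $E=E^+\oplus E^-$: $\partial_\xi N(\DD)$ should be block diagonal and $\tnablax N(\Dt)$ block off-diagonal. Their product is then off-diagonal, and the fiber-wise $b$-trace of an off-diagonal block operator is zero, since it is computed from the restriction of the kernel to the $b$-diagonal of $E$, i.e.\ from the diagonal blocks only.

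First I would compute the normal operator of $\Dt$ explicitly using \eqref{Normal D_x}. Since $t\tnablax$ has normal operator $i\xi$ (the fiber connection terms of $\tnabla$ are multiplied by $t$ and drop out, and the remaining fiber operators $\Dd^\pm$ are $t$-independent), we get
\begin{equation*}
N(\Dt)(x,\xi) = \begin{pmatrix} \xi & \Dd^- \\ \Dd^+ & -\xi \end{pmatrix}.
\end{equation*}
Differentiating in $x$ via $\tnablax$ kills the scalar entries $\pm\xi$, because $\xi$ is a fixed covariable. Hence $\tnablax N(\Dt) = \begin{pmatrix} 0 & \dot{\Dd}^- \\ \dot{\Dd}^+ & 0\end{pmatrix}$, which is purely off-diagonal.

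Next, by multiplicativity $N(AB)=N(A)N(B)$ we have $N(\Dt^2)=N(\Dt)^2$. The cross terms $\xi\Dd^- - \Dd^-\xi$ and $\Dd^+\xi-\xi\Dd^+$ vanish because $\xi$ is a scalar, so
\begin{equation*}
N(\Dt^2)=\begin{pmatrix}\xi^2+\Dd^-\Dd^+ & 0\\ 0 & \xi^2+\Dd^+\Dd^-\end{pmatrix}.
\end{equation*}
Then I would argue that $N(\DD)=N(\Dt^2)^{-\frac{s+1}{2}}$. This follows by representing the complex power through the heat operator, as the paper does, and using that $N$ is multiplicative and compatible with the Neumann/parametrix construction of Proposition \ref{prop_invert}. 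Alternatively, it follows by analytic continuation from $s$ with large real part, where $N$ commutes with the Cauchy-integral definition of complex powers. Hence $N(\DD)$ is block diagonal with entries $(\xi^2+\Dd^\mp\Dd^\pm)^{-\frac{s+1}{2}}$, which is the same form already used in Lemma \ref{lem_btrnormal}. Its $\xi$-derivative, $-(s+1)\xi(\xi^2+\Dd^\mp\Dd^\pm)^{-\frac{s+3}{2}}$ on the diagonal, is still block diagonal.

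Therefore the product $\partial_\xi N(\DD)\,\tnablax N(\Dt)$ has zero diagonal blocks, and its fiber-wise $b$-trace vanishes identically for $\Re s$ large, where the operator is in $\Psi_b^{-\infty}$ (or at least $b$-trace class) on the fibers. Meromorphic continuation gives the identity for all $s$, and integrating over $T^*\S^1$ yields $0$. I expect the main obstacle to be the justification that the normal operator commutes with taking complex powers, so that $N(\DD)$ really is the diagonal power above. I would first try to handle this by the heat-kernel representation together with the multiplicativity of $N$ and uniqueness of solutions to the heat equation for the normal family.
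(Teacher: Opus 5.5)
Your proposal is correct and is essentially the paper's proof. The paper also uses the heat-operator representation to show that $\partial_\xi N(\DD)$ is the block-diagonal operator $-(s+1)\xi\,\mathrm{diag}\big((\xi^2+\Dd^-\Dd^+)^{-\frac{s+3}{2}},(\xi^2+\Dd^+\Dd^-)^{-\frac{s+3}{2}}\big)$, observes that $\tnablax N(\Dt)$ is purely off-diagonal, and concludes that the $b$-trace of the product vanishes pointwise on $T^*\S^1$. The only difference is that the paper takes $N(\DD)=N(\Dt^2)^{-\frac{s+1}{2}}$ for granted, whereas you flag this step and sketch a justification via the heat representation and the multiplicativity of $N$.
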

\begin{proof}
    Since $e^{-\tau(\xi^2+\Dd^-\Dd^+)} = e^{-\tau\xi^2} e^{-\tau\Dd^-\Dd^+}$ by the uniqueness of solutions to the heat equation, observe that \begin{align*}
        \partial_\xi (\xi^2+\Dd^-\Dd^+)^{-\frac{s+1}{2}} &= \partial_\xi \frac{1}{\Gamma(\frac{s+1}{2})} \int_0^\infty \tau^{\frac{s+1}{2}} e^{-\tau(\xi^2+\Dd^-\Dd^+)}\,\frac{d\tau}{\tau} \\
        &= -2\xi\left(\frac{s+1}{2}\right) (\xi^2+\Dd^-\Dd^+)^{-\frac{s+3}{2}}.
    \end{align*}
    Hence
    \begin{align*}
        \partial_\xi N(\DD) 
        &= -2\xi\left(\frac{s+1}{2}\right) \begin{pmatrix}
             (\xi^2+\Dd^-\Dd^+)^{-\frac{s+3}{2}} & 0 \\ 0 & (\xi^2+\Dd^+\Dd^-)^{-\frac{s+3}{2}}
        \end{pmatrix}.      
    \end{align*}
    Since  $\tnablax N(\Dt) = \begin{pmatrix}
        0 & \dot{\Dd}^- \\ \dot{\Dd}^+ & 0
    \end{pmatrix}$, 
    we have that

    \begin{multline*}
         \bTr(\partial_\xi N(\DD) \tnablax N(\Dt) )  \\= -2\xi \left(\frac{s+1}{2}\right) \bTr \begin{pmatrix}
              0 & (\xi^2+\Dd^-\Dd^+)^{-\frac{s+3}{2}}\dot{\Dd}^- \\ (\xi^2+\Dd^+\Dd^-)^{-\frac{s+3}{2}}\dot{\Dd}^+ & 0
         \end{pmatrix}
         = 0.
    \end{multline*}
\end{proof}

Plugging in the results of Lemmas \ref{lem_btrnormal} and \ref{lem_btrderivs} into \eqref{eqtrace} yields
\begin{equation}
    \lim_{t\to 0}\bTr((\Dt^2)^{-\frac{s+1}{2}}\Dt) = \int_{T^*\S^1} \bTr (q_1(\DD)N(\Dt)).
    \label{eq_bTrq1N}
\end{equation}
From here we have that
\begin{align}
    \bTr(q_1(\DD) N(\Dt))
    &= \frac{1}{\Gamma(\frac{s+1}{2})}\int_0^\infty \tau^\frac{s+1}{2} \bTr( q_1(e^{-\tau\Dt^2})N(\Dt))\,\frac{d\tau}{\tau}.
    \label{eq_q1N}
\end{align}
Observe that
\begin{equation*}
    \Dt^2 = \begin{pmatrix}
        (-it\tnablax)^2+\Dd^-\Dd^+ & 0 \\ 0 & (it \tnablax)^2+ \Dd^+\Dd^-
    \end{pmatrix} + t\begin{pmatrix}
        0 & -i\dot{\Dd}^- \\ i \dot{\Dd}^+ & 0
    \end{pmatrix}.
\end{equation*}
If we define $\alpha$ and $\beta$ by the matrices above, we can apply Duhamel's formula twice with $\Dt^2 = \alpha+t\beta$ to get
\begin{align*}
    q_1(e^{-\tau\Dt^2})
    &= -\int_0^\tau e^{-(\tau-r)N(\alpha)} N(\beta) e^{-rN(\alpha)}\, dr.
\end{align*}
Plugging this into \eqref{eq_q1N} yields
\begin{equation*}
    -\frac{1}{\Gamma(\frac{s+1}{2})}\int_0^\infty \int_0^\tau \tau^\frac{s+1}{2} \bTr( e^{-(\tau-r)N(\alpha)} N(\beta) e^{-rN(\alpha)}N(\Dt))\,dr \frac{d\tau}{\tau}.
\end{equation*}
A simple computation using  the facts that $e^{-\tau\Dd^-\Dd^+}\Dd^- = \Dd^-e^{-\tau\Dd^+\Dd^-}$ and $e^{-\tau\Dd^+\Dd^-}\Dd^+ = \Dd^+e^{-\tau\Dd^-\Dd^+}$ shows that $e^{-rN(\alpha)}N(\Dt) = N(\Dt) e^{-rN(\alpha)}$. Thus the above expression becomes
\begin{equation}
    -\frac{1}{\Gamma(\frac{s+1}{2})} \int_0 ^\infty \int_0^\tau  \tau^\frac{s+1}{2}\bTr(e^{-\tau N(\alpha)}N(\beta)N(\Dt))\,dr \frac{d\t}{\t} + R_1,
    \label{eq_bTrR1}
\end{equation}
where $R_1$ is defined in the statement of Lemma \ref{lem_R1}. This remainder term is due to the fact that the $b$-trace of a commutator does \textit{not} necessarily vanish.

For now, we will focus on the first term of \eqref{eq_bTrR1}. Since the integrand is independent of $r$, the term becomes
\begin{align*}
    & -\frac{1}{\Gamma(\frac{s+1}{2})} \int_0^\infty \t^\frac{s+3}{2} \bTr(e^{-\t N(\alpha)} N(\beta)N(\Dt))\,\frac{d\t}{\t}.
\end{align*}
Bringing the $b$-trace back to the front and using $\frac{s+1}{2}\Gamma(\frac{s+1}{2}) = \Gamma(\frac{s+3}{2})$, we have
\begin{equation}
    -\frac{s+1}{2}\bTr(N(\alpha)^{-\frac{s+3}{2}}N(\beta)N(\Dt)).
    \label{eq_N(A)N(B)N(Dt)}
\end{equation}
Observe that
\begin{equation*}
    N(\alpha)^{-\frac{s+3}{2}}N(\beta)N(\Dt) =
    \begin{pmatrix}
        -i(\xi^2+\Dd^-\Dd^+)^{-\frac{s+3}{2}}\dot{\Dd}^- \Dd^+ & * \\ * &  i(\xi^2+ \Dd^+\Dd^-)^{-\frac{s+3}{2}} \dot{\Dd^+} \Dd^-
    \end{pmatrix}.
\end{equation*}
It follows that \eqref{eq_N(A)N(B)N(Dt)} becomes
\begin{equation}
     \frac{s+1}{2i}(\bTr((\xi^2+ \Dd^+\Dd^-)^{-\frac{s+3}{2}} \dot{\Dd^+} \Dd^-) - \bTr((\xi^2+\Dd^-\Dd^+)^{-\frac{s+3}{2}}\dot{\Dd}^- \Dd^+)).
    \label{eq_J1J2}
\end{equation}
Focusing on the first term of \eqref{eq_J1J2}, observe that
\begin{align}
    \bTr((\xi^2+\Dd^+\Dd^-)^{-\frac{s+3}{2}}\dot{\Dd}^+\Dd^-) &= \bTr(\Dd^-\dot{\Dd}^+(\xi^2+\Dd^-\Dd^+)^{-\frac{s+3}{2}}) + R_2 +R_3,
    \label{eq_integrand1}
\end{align}
where $R_2$ and $R_3$ are defined in the statement of Lemma \ref{lem_R2R3}.
Focusing on the second term of \eqref{eq_J1J2}, observe that
\begin{multline}
     - \bTr((\xi^2+\Dd^-\Dd^+)^{-\frac{s+3}{2}}\dot{\Dd}^- \Dd^+) 
     \\ =\bTr((\xi^2+\Dd^-\Dd^+)^{-\frac{s+3}{2}}\Dd^-\dot{\Dd}^+) - \bTr((\xi^2+\Dd^-\Dd^+)^{-\frac{s+3}{2}}[\tnablax,\Dd^-\Dd^+]).
     \label{eq_integrand2}
\end{multline}
We will now take \eqref{eq_integrand1} and \eqref{eq_integrand2} and plug them into \eqref{eq_J1J2}, and then that result will be plugged into the first term of \eqref{eq_bTrR1}. Finally, the entirety of \eqref{eq_bTrR1} will be inserted into the integrand of \eqref{eq_bTrq1N}. To simplify this massive expression, we will use the following across each term:
\begin{align*}
    \frac{s+1}{2}\int_{-\infty}^\infty(\xi^2+\Dd^-\Dd^+)^{-\frac{s+3}{2}}\,\dbar\xi & =  \frac{1}{2\sqrt{\pi}} \frac{\Gamma(\frac{s}{2}+1)}{\Gamma(\frac{s+1}{2})}(\Dd^-\Dd^+)^{-\frac{s}{2}-1}.
\end{align*}
This leaves us with
\begin{multline*}
    \lim_{t\to 0} \bTr(\DDD) = \frac{1}{2i\sqrt{\pi}}\frac{\Gamma(\frac{s}{2}+1)}{\Gamma(\frac{s+1}{2})}\int_{\S^1}\bigg( \bTr(\Dd^-\dot{\Dd}^+(\Dd^-\Dd^+)^{-\frac{s}{2}-1})\\+\bTr((\Dd^-\Dd^+)^{-\frac{s}{2}-1}\Dd^-\dot{\Dd}^+) -\bTr((\Dd^-\Dd^+)^{-\frac{s}{2}-1}[\tnablax,\Dd^-\Dd^+])\bigg) +R,
\end{multline*}
where $R= \int_{T^*\S^1}(R_1+\frac{s+1}{2i}R_2+\frac{s+1}{2i}R_3)\,\dbar\xi$. From here we can use Proposition \ref{lem_bzeta} on the last term of the integrand to get
\begin{multline*}
    \frac{1}{2i\sqrt{\pi}}\frac{\Gamma(\frac{s}{2}+1)}{\Gamma(\frac{s+1}{2})}\int_{\S^1}\bigg( \bTr(\Dd^-\dot{\Dd}^+(\Dd^-\Dd^+)^{-\frac{s}{2}-1})\\+\bTr((\Dd^-\Dd^+)^{-\frac{s}{2}-1}\Dd^-\dot{\Dd}^+) +\frac{1}{\Gamma(\frac{s}{2}+1)}\partial_x\, ^b\overline\zeta(s,\Dd^-\Dd^+)\bigg) +R.
\end{multline*}
The $^b\overline{\zeta}$ term will vanish under the integral by the Fundamental Theorem of Calculus.
Recall that $^b\overline{\eta}(s,\Dt) = \frac{\Gamma(\frac{s+1}{2})}{\sqrt{\pi}}\bTr((\Dt^2)^{-\frac{s+1}{2}}\Dt) $.
Therefore,
\begin{equation*}
    \lim_{t\to 0}i\pi\,^b\overline{\eta}(s,\Dt) = \int_{\S^1}\,^b\omega(s) + i\sqrt{\pi}\Gamma\left(\frac{s+1}{2}\right) R.
\end{equation*}
We will now simplify the remainder terms. Putting Lemma \ref{lem_R1} and Lemma \ref{lem_R2R3} together yields 
\begin{align*}
    R &= \frac{1}{8i\pi}\int_{\S^1}\Tr \left(  (\Dd^2)^{-\frac{s+1}{2}}(G^*\dot{G}\Dd - 3G^* \dot{P} +\dot{P}^*G - \dot{G}^* P)   \right).
\end{align*}
Using the algebraic relations of the operators, the above expression can be rewritten as
\begin{equation*}
    R = -\frac{1}{4i\pi}\int_{\S^1}\Tr \left(  (\Dd^2)^{-\frac{s+1}{2}}(G^*\dot{G}\Dd +\dot{\Dd})  \right).
\end{equation*}
Applying Proposition \ref{lem_bEta} gives us that
\begin{align*}
    R &= -\frac{1}{4i\pi}\int_{\S^1}\Tr \left(  (\Dd^2)^{-\frac{s+1}{2}}(G^*\dot{G}\Dd  ) \right) +\frac{1}{4i s\sqrt{\pi}\Gamma(\frac{s+1}{2})}\int_{\S^1}\D_x\, \overline{\eta}(s,\D M).
\end{align*}
Again, the $\overline{\eta}$ term will vanish under the integral by the Fundamental Theorem of Calculus. Therefore
\begin{equation*}
    i\sqrt{\pi}\Gamma\left(\frac{s+1}{2}\right)R =  -\frac{\Gamma(\frac{s+1}{2})}{4\sqrt{\pi}} \int_{\S^1}\Tr ((\Dd^2)^{-\frac{s+1}{2}}G^*\dot{G}\Dd),
\end{equation*}
and the proof of Theorem \ref{thm_GlobalAnomaly} is complete.

\subsection{Corollaries}


Parallel transport in $\mathcal{L}$ along $B$ is defined as
\[
\mathrm{PT}(\mathcal{L}) = e^{- \int_B \b \omega}.
\]
Since $B = \S^1$ represents a closed curve, the holonomy of $\mathcal{L}$ is defined as $\mathrm{hol} (\mathcal{L}) := \mathrm{PT}(\mathcal{L})$. For the sake of simplicity we chose to focus our attention on the case where $B=\S^1$. However, if we are interested in the parallel transport in $\mathcal{L}$ along a curve in the base, we can take $B$ to be diffeomorphic to $\overline{\R}$, the exponential compactification of $\R$ to a closed interval. The curve may be closed if the end points of $\overline{\R}$ represent the same point in the base, in which case parallel transport measures the holonomy and the problem reduces to the case with $B=\S$. Very few adjustments are needed to adapt the contents of this paper to this setting, and the extra terms in the subsequent theorem arise from the Fundamental Theorem of Calculus. We now state the analogous main result:
\begin{theorem}
The normalized $b$-eta function $\beb(s,\Dd_t)$ extends smoothly to
$t = 0$ and in the convergence of meromorphic functions of $s \in \C$,
\begin{multline*}
\notag \lim_{t \to 0} i \pi\, \beb(s,\Dd_t) - \int_B \b \omega (s) = \\
 \frac12
\Big\{{}^b \overline{\zeta}(s,\Dd^-_{\infty} \Dd^+_{\infty}) - {}^b \overline{\zeta}(s,\Dd^-_{-\infty} \Dd^+_{-\infty}) \Big\} 
  +\frac{1}{4 s} \Big\{ \overline{\eta}(s,\D M_\infty) - \overline{\eta}(s,\D
M_{-\infty}) \Big\} \\
 -\frac{\Gamma \left( \frac{s + 1}{2} \right)}{4 \sqrt{\pi}}
\int_{B} \Tr \Big( (\Dd^2)^{- \frac{s
+ 1}{2}} \, \Dd \, G^{*} \, \dot{G}  \Big) .
\end{multline*}
\end{theorem}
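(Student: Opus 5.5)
The plan is to rerun the proof of Theorem \ref{thm_GlobalAnomaly} verbatim with $B\cong\overline{\R}$ in place of $\S^1$, keeping every boundary term produced by the Fundamental Theorem of Calculus instead of discarding it. Away from the ends of $\overline{\R}$ nothing changes. Near the ends we assume the family is of product type in the exponential compactification coordinate, so that the $ab$-calculus, Proposition \ref{prop_invert}, Theorem \ref{thmTrace} and Theorem \ref{thm_fullsymcomp} still apply. We should check that the kernel estimates remain uniform at the ends, so that the $t^{-1}$ term and the $q_1$ expansion in \eqref{eqtrace} are still justified. Lemmas \ref{lem_btrnormal} and \ref{lem_btrderivs} are pointwise in $x$ (oddness in $\xi$, off-diagonal trace), so they carry over unchanged. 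This gives
\[
\lim_{t\to0}\bTr\big((\Dt^2)^{-\frac{s+1}{2}}\Dt\big)=\int_{T^*B}\bTr\big(q_1((\Dt^2)^{-\frac{s+1}{2}})N(\Dt)\big).
\]
Lemmas \ref{lem_R1} and \ref{lem_R2R3} are likewise pointwise in $x$, so the same bookkeeping applies.

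First main step: the Duhamel expansion, the commutation $e^{-rN(\alpha)}N(\Dt)=N(\Dt)e^{-rN(\alpha)}$, and the identities \eqref{eq_integrand1}, \eqref{eq_integrand2} again yield
\[
\lim_{t\to0} i\pi\,\beb(s,\Dt)=\int_B\b\omega(s)+\frac{1}{2}\int_B \D_x\,{}^b\overline{\zeta}(s,\Dd^-\Dd^+)+i\sqrt{\pi}\,\Gamma\Big(\tfrac{s+1}{2}\Big)R.
\]
Here I use Proposition \ref{lem_bzeta} to rewrite the commutator term. I would recheck the constant: the prefactor $\frac{1}{2i\sqrt\pi}\frac{\Gamma(s/2+1)}{\Gamma((s+1)/2)}\cdot\frac{1}{\Gamma(s/2+1)}$, multiplied by $i\pi\Gamma(\frac{s+1}{2})/\sqrt\pi$, gives exactly $\frac12$. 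The fundamental theorem of calculus on $\overline{\R}$ then gives $\frac12\{{}^b\overline{\zeta}(s,\Dd^-_\infty\Dd^+_\infty)-{}^b\overline{\zeta}(s,\Dd^-_{-\infty}\Dd^+_{-\infty})\}$, where subscripts denote restriction to the endpoint fibers.

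Second main step: the remainder. Combining Lemmas \ref{lem_R1} and \ref{lem_R2R3} and the algebraic simplification gives $R=-\frac{1}{4i\pi}\int_B\Tr((\Dd^2)^{-\frac{s+1}{2}}(G^*\dot G\Dd+\dot\Dd))$. By Proposition \ref{lem_bEta}, the $\dot\Dd$ part equals $\frac{1}{4is\sqrt\pi\,\Gamma(\frac{s+1}{2})}\int_B\D_x\overline{\eta}(s,\D M)$. Multiplying by $i\sqrt\pi\,\Gamma(\frac{s+1}{2})$ gives $\frac{1}{4s}\int_B\D_x\overline\eta(s,\D M)=\frac{1}{4s}\{\overline\eta(s,\D M_\infty)-\overline\eta(s,\D M_{-\infty})\}$. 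The $G^*\dot G$ term gives the last term exactly as before; note $\Tr(A\Dd G^*\dot G)=\Tr(AG^*\dot G\Dd)$ is legitimate since this is an honest trace on the closed boundary fibers.

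The main obstacle is analytic rather than algebraic: making sure the $ab$-calculus and the $b$-trace limit Theorem \ref{thmTrace} work over a noncompact base compactified to an interval. That requires the family to be constant (or exponentially convergent) near $\pm\infty$, so the integrals over $T^*B$ converge and the boundary values of $\overline\eta$ and ${}^b\overline\zeta$ make sense as meromorphic functions. I would first try to assume exact product structure near the ends, which makes all $x$-derivatives vanish there and lets every step above localize. The factor $1/s$ is harmless, since it arises only from Proposition \ref{lem_bEta}, whose right side carries the factor $s$.
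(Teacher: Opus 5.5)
Your proposal is correct and is essentially the paper's own argument: the paper only remarks that the $\overline{\R}$ case follows by rerunning the $\S^1$ proof and keeping the Fundamental Theorem of Calculus boundary terms. These come from Proposition~\ref{lem_bzeta} and Proposition~\ref{lem_bEta} and carry exactly the coefficients $\frac12$ and $\frac{1}{4s}$ you computed. The paper does not carry out the analytic justification over a base with boundary and defers it to later work; your product-type (or exponentially convergent) ends assumption is a reasonable way to supply it, so your treatment is at least as complete as the paper's.
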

More details on this result will be presented in \cite{Corners}. The subscripts $\infty$ and $-\infty$ denote the right and left end points, respectively, in $\overline{\R}$. Thus, the operators $\Dd^\pm_\infty$ (respectively $\Dd^\pm_{-\infty}$) are the restrictions of $\Dd^\pm$ to the right (respectively left) end point of $\overline{\R}$ corresponding to $+\infty$ (respectively $-\infty$) in $\R$. The functions ${}^b \overline{\zeta}(s,\Dd^\mp_{\infty} \Dd^\pm_{\infty})$ denote normalized $b$-zeta functions. The terms $\overline{\eta}(s,\D M_{\pm\infty})$ represent the eta function restricted to the right and left end points of $\overline{\R}$. When the curve is closed so that $\Dd^\pm_\infty = \Dd^\pm_{-\infty}$, the first and second terms on the right cancel and we are left with the original result.

By taking the regular of $s = 0$ in Theorem \ref{thm-main} we recover, for free, a generalization of the Bismut-Freed theorem. The second theorem below is for the case where $\dot{G}=0$.

\begin{theorem}  $\be(\Dd_t) := \beb(0,\Dd_t)$, the $b$-eta invariant, extends smoothly to $t = 0$
and
\begin{multline*}
\lim_{t \to 0} i \pi\, \be(\Dd_t) + \log \mathrm{PT}(\mathcal{L})  = \\ -\frac{1}{4} \,
\int_{\S^1} \Tr_{[0]} \Big( (\Dd^2)^{-
\frac{1}{2}} \, \Dd \, G^{-1}
\, \dot{G}  \Big)
 + \frac{1}{8} (\gamma + \log 4) \, 
\int_{\S^1} \Tr_{[-1]} \Big( (\Dd^2)^{- \frac{1}{2}} \, \Dd\,  G^{-1}
\, \dot{G} \Big),
\end{multline*}
where $\Tr_{[k]} \Big( (\Dd^2)^{-
\frac{1}{2}} \, \Dd \, G^{-1}
\, \dot{G}  \Big)$ denotes the coefficient of $s^{k}$ in the Laurent expansion of $\Tr \Big( (\Dd^2)^{- \frac{s + 1}{2}} \, \Dd\, G^{-1} \, \dot{G}
\Big)$ at $s = 0$.
\end{theorem}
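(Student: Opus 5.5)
The plan is to specialize Theorem~\ref{thm-main} to $s=0$ by reading off constant terms of Laurent expansions. No new analysis is needed; everything reduces to bookkeeping of meromorphic functions near $s=0$.

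First, note that $G$ is unitary, so $G^{-1}=G^{*}$. Set
\[
F(s):=\int_{\S^1}\Tr\big((\Dd^2)^{-\frac{s+1}{2}}\Dd\, G^{-1}\dot G\big),
\]
which is meromorphic with at most a simple pole at $s=0$. Hence $F(s)=F_{-1}s^{-1}+F_0+O(s)$, where $F_k=\int_{\S^1}\Tr_{[k]}(\cdots)$. By Theorem~\ref{thm-main}, $\beb(s,\Dd_t)$ extends smoothly to $t=0$. Since convergence is convergence of meromorphic functions, Cauchy's integral formula on a small circle $|s|=r$ shows that every Laurent coefficient at $s=0$ converges as $t\to0$.

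For each $t>0$, the origin is a regular point of $\beb(s,\Dd_t)$. The limiting function $\lim_{t\to0}i\pi\,\beb(s,\Dd_t)$ is therefore regular at $0$ as well, since its negative Laurent coefficients are limits of zeros. Its value at $0$ is $\lim_{t\to0}i\pi\,\be(\Dd_t)$. On the right-hand side, any pole of $\int_{\S^1}\b\omega(s)$ at $s=0$ must therefore cancel against the pole of the last term. Taking constant terms of both sides then gives
\[
\lim_{t\to0} i\pi\,\be(\Dd_t)=\int_{\S^1}\b\omega-\mathrm{CT}_{s=0}\Big[\tfrac{\Gamma(\frac{s+1}{2})}{4\sqrt\pi}F(s)\Big],
\]
where $\b\omega$ is the regular value of $\b\omega(s)$ at $0$, as in the definition of the connection form. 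Since $\mathrm{PT}(\mathcal L)=e^{-\int_B\b\omega}$, we have $\int_{\S^1}\b\omega=-\log\mathrm{PT}(\mathcal L)$, and this term moves to the left-hand side.

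Second, expand the Gamma factor:
\[
\frac{\Gamma(\frac{s+1}{2})}{\sqrt\pi}=1+\frac{s}{2}\,\psi\!\left(\tfrac12\right)+O(s^2), \qquad \psi\!\left(\tfrac12\right)=-\gamma-2\log2=-(\gamma+\log 4).
\]
The constant term of the product is then $\tfrac14\big(F_0-\tfrac12(\gamma+\log4)F_{-1}\big)$. This yields exactly $-\tfrac14F_0+\tfrac18(\gamma+\log4)F_{-1}$, which is the stated formula.

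The only delicate point, and the one I would write out carefully, is the pole cancellation. One must justify that "the regular value of $\b\omega(s)$ at $0$" is the correct object defining $\mathrm{PT}$. One must also justify that extracting constant terms commutes with the $t\to0$ limit. The Cauchy-integral argument on a fixed small circle avoiding the other poles handles both. For the special case $\dot G=0$, the correction terms vanish and one obtains $\lim_{t\to0} i\pi\,\be(\Dd_t)=-\log\mathrm{PT}(\mathcal L)$ directly.
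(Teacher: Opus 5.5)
Your proposal is correct and follows the paper's own route: the paper obtains this theorem simply by taking the regular value at $s=0$ in Theorem~\ref{thm-main}. Your Laurent bookkeeping supplies the details the paper leaves implicit. The constant term $\psi(\tfrac12)=-(\gamma+\log 4)$ of $\Gamma(\frac{s+1}{2})/\sqrt{\pi}$ multiplies the possible simple pole of the boundary trace, producing the $\tfrac18(\gamma+\log4)$ term, and $\log\mathrm{PT}(\mathcal{L})=-\int_{\S^1}\b\omega$ is moved to the left-hand side.
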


\begin{theorem} \label{thm-main3} If the unitary map $G$ is parallel at the boundary, then as meromorphic functions of $s \in \C$,
\begin{equation*}
\notag \lim_{t \to 0} i \pi\, \beb(s,\Dd_t) = \int_{\S^1} \b \omega (s).
\end{equation*}
\end{theorem}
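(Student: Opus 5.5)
This is a direct specialization of Theorem \ref{thm_GlobalAnomaly}, so I would not redo any of the adiabatic analysis. The main theorem already gives, as meromorphic functions of $s\in\C$,
\[
\lim_{t\to0} i\pi\,\beb(s,\Dd_t) = \int_{\S^1}\b\omega(s) - \frac{\Gamma(\frac{s+1}{2})}{4\sqrt{\pi}}\int_{\S^1}\Tr\big((\Dd^2)^{-\frac{s+1}{2}}\Dd G^*\dot G\big).
\]
It therefore suffices to show that the correction term vanishes identically in $s$ when $G$ is parallel at the boundary.

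I read ``$G$ parallel at the boundary'' as $\dot G=0$. Here $\dot G=[\tnabla_{\D_x},G]$ is the derivative of the unitary bundle map $G:E^+|_{\D M}\to E^-|_{\D M}$ along the base, taken with the induced connection. This is the same $\dot G$ that appears in the indicial identity $I(\dot\Dd^+)=i\lambda\dot G+\dot P$ used in Lemmas \ref{lem_R1} and \ref{lem_R2R3}.

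With $\dot G=0$, the operator $(\Dd^2)^{-\frac{s+1}{2}}\Dd G^*\dot G$ is zero on each fiber $\D M_x$. Hence its fiberwise trace vanishes for $\Re s$ large, where the trace is defined by the convergent integral. Since $\Tr((\Dd^2)^{-\frac{s+1}{2}}\Dd G^*\dot G)$ was defined as the meromorphic continuation of that expression, its continuation is the zero function. The Gamma prefactor is meromorphic, so the whole last term is identically zero as a meromorphic function, and the stated identity follows from Theorem \ref{thm_GlobalAnomaly}.

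The only point needing care, and it is minor, is confirming that ``parallel'' means exactly the vanishing of the $\dot G$ entering the main theorem. In particular, the ordering convention $\Dd G^*\dot G$ versus $G^*\dot G\Dd$ in the remainder computations does not matter once $\dot G=0$. I would also remark that in the intermediate formula for $R$ the $\dot P$ terms still survive, but they were already absorbed through Proposition \ref{lem_bEta} and the Fundamental Theorem of Calculus. So nothing beyond $\dot G=0$ is needed.
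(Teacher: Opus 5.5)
Your proposal is correct and is essentially the paper's argument. The paper obtains this theorem as the case $\dot G=0$ of Theorem~\ref{thm_GlobalAnomaly}, where the correction term $-\frac{\Gamma(\frac{s+1}{2})}{4\sqrt{\pi}}\int_{\S^1}\Tr\big((\Dd^2)^{-\frac{s+1}{2}}\Dd G^*\dot G\big)$ vanishes identically, and your added remark that this vanishing persists under meromorphic continuation is correct care that the paper leaves implicit.
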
